\newtheorem{thm}{Theorem}[section]
\newtheorem{defn}[thm]{Definition}
\newtheorem{prop}[thm]{Proposition}
\newtheorem{lem}[thm]{Lemma}
\newtheorem{rem}[thm]{Remark}
\newtheorem{ex}[thm]{Example}
\begin{document}


\title{Characterizations of multiframelets on $\mathbb{Q}_{p}$}

\author[D. Haldar]{Debasis Haldar}

\address{Department of Mathematics\\ NIT Rourkela\\ Rourkela 769008\\ India}

\email{debamath.haldar@gmail.com}
\author[A. Bhandari]{Animesh Bhandari}
\address{Dept. of Mathematics\\ NIT Meghalaya\\ Shillong 793003\\ India}
\email{bhandari.animesh@gmail.com}

\subjclass[2010]{Primary 42C15, 11E95; Secondary 42C40}

\keywords{$p$-adic setting, Besselet, multiframelet, multiframelet operator}

\begin{abstract}
This paper presents a discussion on  $p$-adic multiframe by means of
its wavelet structure, called as multiframelet,  which is build upon $p$-adic wavelet construction. Multiframelets create much excitement in mathematicians as well as engineers on account of its tremendous potentiality to analyze rapidly changing transient signals. Moreover, multiframelets can produce more accurately localized temporal and frequency information, due to this fact it  produce a methodology to reconstruct signals by means of decomposition technique. Various properties of multiframelet sequence in $L^{2}(\mathbb{Q}_{p})$ have been analyzed. Furthermore, multiframelet set in $\mathbb{Q}_{p}$
has been engendered and scrutinized.
\end{abstract}

\maketitle

\section{Introduction}
In 1952, the notion of Hilbert space Frames was introduced by Duffin and Schaeffer in \cite{1} on the study of  nonharmonic Fourier series. Basically frames are extension of orthonormal bases in Hilbert spaces. Frames allows every element of the corresponding Hilbert space to be written as linear combination of frame elements using the associated frame coefficients,  where the coefficients are not unique and due to this reason frames are sometimes called overcomplete system. After several decades in 1986, the importance of frames was realized by  Daubechies, Grossmann and Meyer, took the key step of connecting frames with wavelets and Gabor systems in \cite{3}. Further Young re-introduce the same in his book \cite{2}, which contains basic facts about  frame and Grochenig has given the nontrivial extension of frames to Banach spaces in \cite{8}. Frames having wavelet structures, have been popularized through several generalizations and significant applications, for detail discussion regarding the same readers are  referred to \cite{Ak10, Bh18, 9, 10, 3, 1, 12, 2}.

The field $\mathbb{Q}_{p}$ of $p$-adic numbers is defined as the completion of $\mathbb{Q}$ with respect to metric topology induced by the $p$-adic norm $|\cdot|_{p}$. The $p$-adic norm is defined as follows :
	$$|x|_{p}= \begin{cases}
	0 & \text{ if } x=0, \\
	p^{- \gamma} & \text{ if } x=p^{\gamma}\frac{m}{n} \neq 0 ,~~ p \not| ~ mn.
	\end{cases}$$

This norm has the ultrametric property leading to the strong triangle inequality $\lvert x+y \rvert_{p} \leq \text{max} \{\lvert x \rvert_{p}, \lvert y \rvert _{p} \}$. Here the equality holds if and only if $|x|_{p} \neq |y|_{p}$. Thus the $p$-adic norm is non-Archimedean. Every non-zero $p$-adic number admits Laurent series expansion in $p$ given by
$$x= \sum \limits_{j=\gamma}^{\infty} x_{j}p^j,$$
where $x_{j} \in \{ 0,1,...., p-1 \}$ with $x_{\gamma} \neq 0$ and $\gamma \in \mathbb{Z}$. The fractional part of $x$ is denoted as $\{x\}_{p}$ and is defined by $ \sum \limits_{j=\gamma}^{-1}x_{j}p^j$. Thus  $\{x\}_{p} =0$ if and only if $\gamma \geq 0$. Further $\{0\}_{p}:=0$. The ring of $p$-adic integers is denoted as $\mathbb{Z}_{p}$ and the set of fractional numbers $I_{p}$ are given by
\begin{center}
	$\mathbb{Z}_{p} = \{ x \in \mathbb{Q}_{p} : \{x\}_{p}=0 \}$ and $I_{p} = \{ x \in \mathbb{Q}_{p}: \{x\}_{p} = x \}$.
\end{center}  
Therefore, $I_{p}=\{ \frac{a_{-\gamma}}{p^{\gamma}} + \frac{a_{-\gamma +1}}{p^{\gamma -1}} + \ldots + \frac{a_{-1}}{p} \in \mathbb{Q}_{p} : 0 \leq a_{i} \leq p-1, i \in \mathbb{Z}, a_{-\gamma} \neq 0 \}$. The additive character $\chi_{p}$ on the field $\mathbb{Q}_{p}$ is defined by 
\begin{center}
	$\chi_{p}(x)= e^{2\pi i \{x\}_{p}}$,  $ x \in \mathbb{Q}_{p}$.
\end{center}
 The field $\mathbb{Q}_{p}$ is locally compact, totally disconnected and has no isolated points. There exists Haar measure $dx$ on $\mathbb{Q}_p$ which is positive and invariant under translations, i.e., $d(x+a)=dx$, for all $a \in I_{p}$. It is normalized by the equality $\int \limits_{\mathbb{Z}_{p} }dx =1$. Moreover,
$$d(ax)= |a|_{p}~ dx,~~~  a \in \mathbb{Q}_{p} \setminus \{0 \}.$$

The Hilbert space of all complex-valued functions on $\mathbb{Q}_{p}$, square integrable with respect to the measure $dx$, is denoted by $ L^{2}(\mathbb{Q}_{p}) $. The inner product in this space is given by
\begin{center}
	$\langle f,g \rangle = \int \limits_{\mathbb{Q}_{p}} f(x) \overline{g(x)}dx,$ where $f, g \in L^{2}(\mathbb{Q}_{p}).$
\end{center}

Throughout the paper we denote $\mathcal I$ as the identity operator, $\mathfrak{L}=\{1,2, \cdots, L \}$, $\mathbf{1}_{X}$ as characteristic function of $X$ and $\mathcal{R}_{U}$ is denoted as range of bounded linear operator $U$.\\ 

The exposition of the article is as follows, Section 2 presents basic discussions regarding multiframelets. Furthermore, characterizations of multiframelets through various $p$-adic settings have been analyzed in Section 3.

 \section{Preliminaries and Background}
 Before divining into the main results, throughout this section we discuss some fundamental definitions and preliminary results regarding multiframelets that aid us to produce various characterizations of the same.
\begin{defn} (Multiframelet)
    A set of functions $\mathcal{F}$=$\{f^{(1)}, \ldots, f^{(L)}\} \subset L^{2}(\mathbb{Q}_{p})$ is said to be a multiframelet of order L if $\{f^{(l)}_{j,a}:=p^{\frac{j}{2}}f^{(l)}(p^{-j} \cdot -a) : j \in \mathbb{Z},~ a \in I_{p},~ l \in \mathfrak L\}$ is a frame for $L^{2}(\mathbb{Q}_{p})$ i.e.  there exist $ A,B > 0$ so that for all $g \in L^{2}(\mathbb{Q}_{p})$ we have,
    \begin{equation} \label{dmf}
    A\left \| g \right \|^{2} \leq \sum \limits_{ l \in \mathfrak{L}} \sum \limits_{j \in \mathbb{Z}} \sum \limits_{a \in I_{p}}| \langle g,f^{(l)}_{j,a} \rangle |^{2} \leq B\left \| g \right \|^{2}.
    \end{equation}
\end{defn}
When $L=1$, $\mathcal{F}$ is simply said to be a framelet. $A$ and $B$ are said to be lower and upper multiframelet bounds respectively. Clearly, they are not unique. The optimal lower multiframelet bound is the supremum of all lower multiframelet bounds and the optimal upper multiframelet bound is the infimum of all upper multiframelet bounds. This number $B$ is sometimes called Besselet bound as the second inequality is called as Besselet inequality and corresponding $\mathcal{F}$ is called
$\textbf{Besselet}$. An arbitrary Besselet need not imply existance of lower multiframelet bound and hence not a multiframelet. A multiframelet which ceases to be a multiframelet on the removal of any one of its vectors is termed an $\textbf{exact multiframelet}$. $\mathcal{F}$ is said to be a $\textbf{tight multiframelet}$ if it is possible to choose $A = B$ and $\mathcal{F}$ is said to be a $\textbf{normalized tight multiframelet}$ or $\textbf{Parseval
multiframelet}$ if it is possible to choose $A = B = 1$. Every
orthonormal basis is a Parseval multiframelet but a Parseval
multiframelet need not be orthogonal or a basis.

\begin{ex}
    Kozyrev's multiwavelet (cf. \cite{5}) given by
    \begin{equation} \label{kw}
    \theta_{k}(x) = \chi_{p}(p^{-1}kx) \mathbf{1}_{\mathbb{Z}_p}(x) ,  \hspace{1 cm}  x \in \mathbb{Q}_{p},
    \end{equation}
    where $k = 1, 2,\ldots, p-1$.
\end{ex}

\begin{ex}
    Khrennikov and Shelkovich's multiwavelet (cf. \cite{4}) given by
    \begin{equation}
    \theta ^{(m)}_{s}(x)=\chi_{p}(sx) \mathbf{1}_{\mathbb{Z}_p}(x) , \hspace{.3 cm} x \in \mathbb{Q}_{p}, \nonumber
    \end{equation}
    where $s \in J_{p,m}:= \{ \frac{s_{-m}}{p^m} + \ldots +\frac{s_{-1}}{p} :~ s_{-j} = 0, 1,\ldots, p-1$; $j = 1, 2,\ldots, m;~ s_{-m} \neq 0 \}$, $m \in \mathbb{N}$ is fixed.
\end{ex}

    


 Let $\{f_{j,~a}^{(l)} : j \in \mathbb{Z},~ a \in I_{p},~ l \in \mathfrak{L}\}$ be a multiframelet, then the corresponding synthesis operator $T : \ell^{2}(\mathfrak{L} \times \mathbb{Z} \times I_{p}) \rightarrow L^{2}(\mathbb{Q}_{p})$ is defined as $T \{c(l,j,a)\} = \sum \limits_{l \in \mathfrak L} \sum \limits_{j \in \mathbb{Z}} \sum \limits_{a \in I_{p}} c(l,j,a) f^{(l)}_{j,a}$ and the adjoint operator of $T$, $T^{*} :  L^{2}(\mathbb{Q}_{p}) \rightarrow \ell^{2}(\mathfrak{L} \times \mathbb{Z} \times I_{p})$, is given by $T^* g = \{ \langle g,f_{j,a}^{(l)} \rangle :~ j \in \mathbb{Z},~ a \in I_{p},~ l \in \mathfrak{L}\}$, is called as analysis operator. By composing synthesis and analysis operator, we obtain the associated multiframelet operator $\mathcal{S} :L^{2}(\mathbb{Q}_{p}) \rightarrow L^{2}(\mathbb{Q}_{p})$, defined as $\mathcal{S}g=TT^{*}g= \sum \limits_{l \in \mathfrak L} \sum \limits_{j \in \mathbb{Z}} \sum \limits_{a \in I_{p}} \langle g,f^{(l)}_{j,a} \rangle f^{(l)}_{j,a}$ and therefore equation (\ref{dmf}) can be written as $A\left \| g \right \|^{2} \leq \langle \mathcal{S}g, g \rangle \leq B\left \| g \right \|^{2}$ for all $g \in L^{2}(\mathbb{Q}_{p})$. 

\begin{rem}
It is to be noted that $\mathcal{S}$ is  bounded, linear, self-adjoint,  bijective operator, which is evident from the following discussion:

 Clearly $\mathcal{S}$ is well-defined and hence linearity follows from the linearity property of inner product.
	
	\noindent \underline{Injectivity}
	$\mathcal{S}g =0 \Rightarrow \left \|g \right \|=0$ (by frame condition) $\Rightarrow g=0$. Hence $\mathcal{S}$ is injective.
	
	\noindent \underline{Surjectivity} Let $g \in (\text{Im} ~ \mathcal{S})^{\bot}$. Then $\langle \mathcal{S}f,g \rangle =0, ~~ \forall ~ f \in L^{2}(\mathbb{Q}_{p})$.\\
	In particular, $\langle \mathcal{S}g , g \rangle =0 \Rightarrow \left \|g \right \| =0 \Rightarrow g=0$. So $\text{Im}~ \mathcal{S} =L^{2}(\mathbb{Q}_{p})$. Thus $\mathcal{S}$ is surjective and hence bijective. \\
	Positivity and boundedness of $\mathcal{S}$ directly follows from the multiframelet condition.
	
	\noindent \underline{Self-adjointness} For $f, g \in
	L^{2}(\mathbb{Q}_{p})$, let's consider $\langle \mathcal{S}f, g	\rangle = \langle f, \mathcal{S}^{*}g \rangle$ in order to calculate
	$\mathcal{S}^*$. Now
	\begin{eqnarray}
	\langle \mathcal{S}f, g \rangle = \left \langle \sum
	\limits_{l \in \mathfrak L} \sum \limits_{j \in \mathbb{Z}} \sum \limits_{a
		\in I_{p}} \langle f,f^{(l)}_{j,a} \rangle f^{(l)}_{j,a} , g
	\right \rangle \nonumber
	=& \sum \limits_{l \in \mathfrak L} \sum \limits_{j \in \mathbb{Z}} \sum \limits_{a \in I_{p}} \langle f,f^{(l)}_{j,a} \rangle \langle f^{(l)}_{j,a} ,g  \rangle \nonumber \\
	=& \sum \limits_{l \in \mathfrak L} \sum \limits_{j \in \mathbb{Z}} \sum \limits_{a \in I_{p}} \langle f, \langle g,f^{(l)}_{j,a} \rangle f^{(l)}_{j,a} ~\rangle \nonumber \\
	=& \left \langle f, \sum \limits_{l \in \mathfrak L} \sum \limits_{j \in
		\mathbb{Z}} \sum \limits_{a \in I_{p}} \langle g,f^{(l)}_{j,a}
	\rangle f^{(l)}_{j,a} \right \rangle \nonumber
	\end{eqnarray}
	So $\mathcal{S}^*g = \sum \limits_{l \in \mathfrak L} \sum \limits_{j \in
		\mathbb{Z}} \sum \limits_{a \in I_{p}} \langle g, f^{(l)}_{j,a}
	\rangle f^{(l)}_{j,a} = \mathcal{S}g$. Thus $\mathcal{S}^* =
	\mathcal{S}$ and hence $\mathcal{S}$ is self-adjoint.
\end{rem}

\begin{defn}(Dual Multiframelet)
	Let $\{f_{j,~a}^{(l)} : j \in \mathbb{Z},~ a \in I_{p},~ l \in \mathfrak{L} \}$ and $\{g_{j,~a}^{(l)} : j \in \mathbb{Z},~ a \in I_{p},~ l \in \mathfrak{L} \}$ be multiframelet for $L^{2}(\mathbb{Q}_{p})$. Then $\{g_{j,~a}^{(l)} : j \in \mathbb{Z},~ a \in I_{p},~ l \in \mathfrak{L} \}$ is said to be a dual multiframelet of $\{f_{j,~a}^{(l)} : j \in \mathbb{Z},~ a \in I_{p},~ l \in \mathfrak{L} \}$ if  every $h \in L^{2}(\mathbb{Q}_{p})$ can be written as $h= \sum \limits_{l \in \mathfrak{L}} \sum \limits_{j \in \mathbb{Z}} \sum \limits_{a \in I_{p}} \langle h ,g_{j,a}^{(l)} \rangle  f_{j,a}^{(l)}$.
\end{defn}

\begin{defn}(Multiframelet Sequence) A sequence $\{f_{j,~a}^{(l)} : j \in \mathbb{Z},~ a \in I_{p},~ l \in \mathfrak{L}\}$ is said to be a multiframelet sequence if it is a multiframelet for $\overline{\text{span}} \{f_{j,~a}^{(l)} : j \in \mathbb{Z},~ a \in I_{p},~ l \in \mathfrak{L} \}$.
\end{defn}

\begin{rem}
It is to be concluded every multiframelet is a multiframelet sequence.
\end{rem}

\begin{center} \section{Main Results} \end{center}
In this section our primary intention  is to produce various characterizations of multiframelets.

Every frame sequence satisfies  Bessel's  inequlity, for detail discussion regarding the same we refer  \cite{BhMu19}. Analogous result is also satisfied for multiframelet.

\begin{prop}
	Every multiframelet sequence is a Besselet.
\end{prop}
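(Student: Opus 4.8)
The plan is to exploit the fact that a multiframelet sequence, by definition, satisfies the full frame inequality \eqref{dmf} -- in particular the upper (Besselet) bound -- but only on the closed subspace $V := \overline{\text{span}} \{f_{j,a}^{(l)} : j \in \mathbb{Z},~ a \in I_{p},~ l \in \mathfrak{L}\}$. To upgrade this to a genuine Besselet bound valid on all of $L^{2}(\mathbb{Q}_{p})$, the central idea is that the frame coefficients $\langle g, f^{(l)}_{j,a}\rangle$ are insensitive to the component of $g$ lying in $V^{\perp}$, so only the projection of $g$ onto $V$ ever contributes to the coefficient sum.

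First I would fix a constant $B>0$ that serves as an upper multiframelet bound for the sequence on $V$; such a $B$ exists by the definition of multiframelet sequence. Next, let $P$ denote the orthogonal projection of $L^{2}(\mathbb{Q}_{p})$ onto $V$, which exists since $V$ is a closed subspace of a Hilbert space. The key observation is that each generator $f^{(l)}_{j,a}$ lies in $V$, so for every $g \in L^{2}(\mathbb{Q}_{p})$ the component $(\mathcal I - P)g \in V^{\perp}$ is orthogonal to each $f^{(l)}_{j,a}$; hence
\begin{equation} \label{coeff-eq}
\langle g, f^{(l)}_{j,a}\rangle = \langle Pg + (\mathcal I - P)g,\, f^{(l)}_{j,a}\rangle = \langle Pg,\, f^{(l)}_{j,a}\rangle
\end{equation}
for all $j,a,l$.

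From here the proof is immediate: since $Pg \in V$, applying the upper bound available on $V$ to the vector $Pg$, together with \eqref{coeff-eq}, gives
\begin{equation} \label{final-bound}
\sum \limits_{ l \in \mathfrak{L}} \sum \limits_{j \in \mathbb{Z}} \sum \limits_{a \in I_{p}} | \langle g, f^{(l)}_{j,a}\rangle |^{2} = \sum \limits_{ l \in \mathfrak{L}} \sum \limits_{j \in \mathbb{Z}} \sum \limits_{a \in I_{p}} | \langle Pg, f^{(l)}_{j,a}\rangle |^{2} \leq B \left\| Pg \right\|^{2}.
\end{equation}
Finally, because an orthogonal projection is norm non-increasing, $\left\| Pg \right\| \leq \left\| g \right\|$, so the right-hand side of \eqref{final-bound} is bounded by $B \left\| g \right\|^{2}$. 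This holds for arbitrary $g \in L^{2}(\mathbb{Q}_{p})$, which is exactly the Besselet inequality on the full space, establishing the claim.

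I do not anticipate a serious obstacle here; the result is essentially structural. The only point requiring mild care is the orthogonality step \eqref{coeff-eq}, where one must be sure that the generators genuinely belong to $V$ (true by construction of $V$ as their closed span) so that the $V^{\perp}$-component drops out. Everything else reduces to the standard Hilbert space facts that orthogonal projections onto closed subspaces exist and are contractive.
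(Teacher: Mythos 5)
Your proof is correct and follows essentially the same route as the paper: the paper decomposes $g = g_{H} + g_{H^\perp}$ with respect to $H = \overline{\text{span}}\{f_{j,a}^{(l)}\}$, which is exactly your $g = Pg + (\mathcal I - P)g$, and then uses that the coefficients only see $g_H = Pg$ together with $\|g_H\| \leq \|g\|$. Your write-up is just a slightly more explicit, operator-language version of the identical argument.
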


\begin{proof}
	Let $\{f_{j,~a}^{(l)} : j \in \mathbb{Z},~ a \in I_{p},~ l \in \mathfrak{L}\}$ be a multiframelet sequence in
	$L^{2}(\mathbb{Q}_{p})$. So it is a multiframelet for $H=\overline{span}\{f_{j,~a}^{(l)} : j \in \mathbb{Z},~ a \in I_{p},~ l \in \mathfrak{L}\}$. Since $L^{2}(\mathbb{Q}_{p}) = H \oplus
	H^\perp$, then for every $g \in L^{2}(\mathbb{Q}_{p})$,  $g = g_{H} + g_{H^\perp}$ for some $g_{H} \in H,~ g_{H^\perp} \in H^\perp$.
	Therefore, for some $B >0$, we have,
	$$\sum \limits_{l \in \mathfrak L} \sum \limits_{j \in \mathbb{Z}} \sum \limits_{a \in I_{p}}| \langle g, f^{(l)}_{j,a} \rangle |^{2}
	= \sum \limits_{l \in \mathfrak L} \sum \limits_{j \in \mathbb{Z}} \sum \limits_{a \in I_{p}}| \langle g_{H}, f^{(l)}_{j,a} \rangle |^{2} \leq B \|g_{H} \|^{2} \leq B \|g\|^{2}.$$ Thus $\{f_{j,~a}^{(l)} : j \in \mathbb{Z},~ a \in I_{p},~ l \in \mathfrak{L}\}$ is a Besselet with bound $B$.
\end{proof}

By showing a sequence to be a Besselet in a dense subset is sufficient to prove the said sequence is a Besselet in the whole space. Analogous result for frame can be noticed in \cite{7}.

\begin{prop} \label{Besselet}
    If $\mathcal{F}$=$\{f^{(1)}, \ldots, f^{(L)}\}$ is a Besselet for a dense subset $V$ of $L^{2}(\mathbb{Q}_{p})$, then $\mathcal{F}$ is a Besselet for $L^{2}(\mathbb{Q}_{p})$.
\end{prop}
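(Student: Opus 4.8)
The plan is to establish the Besselet inequality on all of $L^{2}(\mathbb{Q}_{p})$ by a standard density-plus-continuity argument. Let $B$ be the Besselet bound for $\mathcal{F}$ on the dense subset $V$, so that for every $v \in V$ we have $\sum_{l \in \mathfrak{L}} \sum_{j \in \mathbb{Z}} \sum_{a \in I_{p}} |\langle v, f^{(l)}_{j,a}\rangle|^{2} \leq B\|v\|^{2}$. Given an arbitrary $g \in L^{2}(\mathbb{Q}_{p})$, I would first use the density of $V$ to choose a sequence $\{v_{n}\} \subset V$ with $v_{n} \to g$ in the $L^{2}$-norm.

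The key analytic step is to pass from the bound on each $v_{n}$ to a bound on $g$. For this I would work with finite partial sums: fix any finite subset $\Lambda \subset \mathfrak{L} \times \mathbb{Z} \times I_{p}$ and consider $\sum_{(l,j,a) \in \Lambda} |\langle g, f^{(l)}_{j,a}\rangle|^{2}$. Since the inner product $\langle \cdot, f^{(l)}_{j,a}\rangle$ is continuous in its first argument (by Cauchy--Schwarz, $|\langle v_{n}, f^{(l)}_{j,a}\rangle - \langle g, f^{(l)}_{j,a}\rangle| \leq \|v_{n} - g\| \, \|f^{(l)}_{j,a}\|$), each finite sum over $\Lambda$ for $v_{n}$ converges to the corresponding finite sum for $g$ as $n \to \infty$. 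Because the finite sum involves only finitely many terms, this limit interchange is elementary and requires no uniformity. Hence
\begin{equation}
\sum_{(l,j,a) \in \Lambda} |\langle g, f^{(l)}_{j,a}\rangle|^{2} = \lim_{n \to \infty} \sum_{(l,j,a) \in \Lambda} |\langle v_{n}, f^{(l)}_{j,a}\rangle|^{2} \leq \lim_{n \to \infty} B\|v_{n}\|^{2} = B\|g\|^{2}, \nonumber
\end{equation}
where the inequality holds because each finite partial sum for $v_{n}$ is dominated by the full sum, which is at most $B\|v_{n}\|^{2}$, and $\|v_{n}\| \to \|g\|$ by continuity of the norm.

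Finally, since the bound $\sum_{(l,j,a) \in \Lambda} |\langle g, f^{(l)}_{j,a}\rangle|^{2} \leq B\|g\|^{2}$ holds uniformly over all finite subsets $\Lambda$, I take the supremum over $\Lambda$ to conclude that the full (nonnegative) series satisfies $\sum_{l \in \mathfrak{L}} \sum_{j \in \mathbb{Z}} \sum_{a \in I_{p}} |\langle g, f^{(l)}_{j,a}\rangle|^{2} \leq B\|g\|^{2}$. This is exactly the Besselet inequality for $g$, and since $g$ was arbitrary, $\mathcal{F}$ is a Besselet for $L^{2}(\mathbb{Q}_{p})$ with the same bound $B$.

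The main subtlety to handle carefully is the limit interchange: one cannot directly pass the limit inside the full infinite series (that would require uniform control), so the argument must be routed through finite partial sums where the limit is unproblematic, and only afterwards take the supremum over finite index sets. This finite-truncation device is the crux that makes the density argument rigorous, and it is the step I would be most careful to phrase correctly.
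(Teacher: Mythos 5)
Your proof is correct and rests on the same key device as the paper's: reduce to finite partial sums, where continuity of $g \mapsto \langle g, f^{(l)}_{j,a}\rangle$ and of the norm lets density do the work, and then pass to the supremum over finite index sets. The paper packages this as a proof by contradiction (a violating $g$ would yield a violating $h \in V$ via a finite truncation), but that is only a cosmetic difference from your direct argument, which if anything spells out the limit interchange more carefully.
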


\begin{proof}
Since $\mathcal{F}$=$\{f^{(1)}, \ldots, f^{(L)}\}$ is a Besselet for  $V$, a dense subset of $L^{2}(\mathbb{Q}_{p})$, then there exists a constant $B >0$ and for every $g \in V \subset L^{2}(\mathbb{Q}_{p})$ we have,
 $$\sum \limits_{l \in \mathfrak L} \sum \limits_{j \in \mathbb{Z}} \sum \limits_{a \in I_{p}}| \langle g,f^{(l)}_{j,a} \rangle |^{2} \leq B\left \| g \right \|^{2}.$$
Now for  $g \in L^{2}(\mathbb{Q}_{p})$, suppose $B\left \| g \right \|^{2} < \sum \limits_{l \in \mathfrak L} \sum \limits_{j \in \mathbb{Z}} \sum \limits_{a \in I_{p}}| \langle g,f^{(l)}_{j,a} \rangle |^{2}$. Then there are finite sets $E \subset \mathbb{Z}, ~ F \subset I_{p}$ so that $B\left \| g \right \|^{2} < \sum \limits_{l \in \mathfrak L} \sum \limits_{j \in E} \sum \limits_{a \in F}| \langle g,f^{(l)}_{j,a} \rangle |^{2}$. Again since $V$ is dense in $L^{2}(\mathbb{Q}_{p})$, then there exists $h \in V$ so that $B\left \| h \right \|^{2} < \sum \limits_{l \in \mathfrak L} \sum \limits_{j \in E} \sum \limits_{a \in F}| \langle h,f^{(l)}_{j,a} \rangle |^{2}$, which is a contradiction.
Thus for all $g \in L^{2}(\mathbb{Q}_{p})$ we obtain, $$\sum \limits_{l \in \mathfrak L} \sum \limits_{j \in \mathbb{Z}} \sum \limits_{a \in I_{p}}| \langle g,f^{(l)}_{j,a} \rangle |^{2} \leq
B\left \| g \right \|^{2},$$ and  hence $\mathcal{F}$ is a Besselet in $L^{2}(\mathbb{Q}_{p})$.
\end{proof}

In the following result we extend Lemma \ref{Besselet} for lower framelet condition. The following Lemma shows that every multiframelet on a dense subset is a multiframelet for the whole space.

\begin{prop} \label{dense}
    If $\mathcal{F}$=$\{f^{(1)}, \ldots, f^{(L)}\}$ is a multiframelet for a dense subset $V$ of $L^{2}(\mathbb{Q}_{p})$, then $\mathcal{F}$ is a multiframelet for $L^{2}(\mathbb{Q}_{p})$.
\end{prop}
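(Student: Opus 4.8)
The plan is to reduce the statement to extending only the lower multiframelet bound, since the upper (Besselet) bound on all of $L^{2}(\mathbb{Q}_{p})$ is already guaranteed by Proposition \ref{Besselet}. Indeed, since $\mathcal{F}$ is a multiframelet on $V$ it is in particular a Besselet on $V$, so Proposition \ref{Besselet} furnishes a constant $B>0$ with $\sum_{l \in \mathfrak{L}}\sum_{j \in \mathbb{Z}}\sum_{a \in I_{p}} |\langle g, f^{(l)}_{j,a}\rangle|^{2} \leq B\|g\|^{2}$ for every $g \in L^{2}(\mathbb{Q}_{p})$. Once the upper bound holds globally, the analysis operator $T^{*}$ is a bounded linear map on the whole space, and the coefficient functional $g \mapsto \|T^{*}g\|^{2} = \sum_{l \in \mathfrak{L}}\sum_{j \in \mathbb{Z}}\sum_{a \in I_{p}}|\langle g, f^{(l)}_{j,a}\rangle|^{2}$ is continuous on $L^{2}(\mathbb{Q}_{p})$.

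Next I would fix an arbitrary $g \in L^{2}(\mathbb{Q}_{p})$ and exploit the density of $V$: choose a sequence $(g_{n}) \subset V$ with $g_{n} \to g$ in norm. For each $n$ the lower multiframelet inequality on $V$ gives $A\|g_{n}\|^{2} \leq \|T^{*}g_{n}\|^{2}$. The plan is then to pass to the limit. Continuity of the norm yields $\|g_{n}\|^{2} \to \|g\|^{2}$, while boundedness of $T^{*}$ gives $T^{*}g_{n} \to T^{*}g$ in $\ell^{2}(\mathfrak{L}\times\mathbb{Z}\times I_{p})$, hence $\|T^{*}g_{n}\|^{2} \to \|T^{*}g\|^{2}$. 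Letting $n \to \infty$ on both sides of $A\|g_{n}\|^{2} \leq \|T^{*}g_{n}\|^{2}$ therefore produces $A\|g\|^{2} \leq \|T^{*}g\|^{2}$ for every $g \in L^{2}(\mathbb{Q}_{p})$.

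Combining this lower estimate with the global upper bound from Proposition \ref{Besselet} shows that inequality (\ref{dmf}) holds for all $g \in L^{2}(\mathbb{Q}_{p})$, so $\mathcal{F}$ is a multiframelet for the whole space.

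The crux of the argument, and the only place where real care is needed, is the continuity of the map $g \mapsto \|T^{*}g\|^{2}$; this is precisely why the upper bound must be established globally first, since otherwise $T^{*}g$ need not even lie in $\ell^{2}(\mathfrak{L}\times\mathbb{Z}\times I_{p})$ for $g \notin V$ and the limit passage would be unjustified. Once that continuity is in hand the interchange of limit and inequality is routine, so the main obstacle is conceptual rather than computational: recognizing the logical dependence on Proposition \ref{Besselet} and the boundedness of $T^{*}$ it provides.
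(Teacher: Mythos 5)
Your proof is correct and follows essentially the same route as the paper: extend the Besselet bound to all of $L^{2}(\mathbb{Q}_{p})$, observe that $T^{*}$ is then globally bounded, and transfer the lower inequality from the dense subset $V$ by continuity. In fact your write-up supplies the details (the appeal to Proposition \ref{Besselet} and the explicit limit passage $A\|g_{n}\|^{2}\leq\|T^{*}g_{n}\|^{2}\to A\|g\|^{2}\leq\|T^{*}g\|^{2}$) that the paper compresses into the phrase ``our assertion is quickly plausible.''
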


\begin{proof}
Using definition of $T^*$, equation (\ref{dmf}) can be written as
\begin{equation*} A \|g\|^{2} \leq \|T^{*}g \|^{2} \leq B \|g\|^{2},
~~~ \text{for}~~\text{all} ~ g \in L^{2}(\mathbb{Q}_{p}).
\end{equation*}
Since $\mathcal{F}$ is a multiframelet for  $V$, for every $g \in V$ we have,
\begin{equation*} \label{equ}
A \|g\|^{2} \leq \|T^{*}g \|^{2} \leq B \|g\|^{2}.
\end{equation*}
Therefore, $T^{*}$ is bounded on $V$ and since $V$ is dense in $L^{2}(\mathbb{Q}_{p})$, our assertion is quickly plausible.
\end{proof}

Image of a multiframelet under closed range operator is a multiframelet sequence. An analogous result for frame can be observed in \cite {7}.

\begin{lem}
    Let $\{f_{j,~a}^{(l)} : j \in \mathbb{Z},~ a \in I_{p},~ l \in \mathfrak{L}\}$ be a multiframelet for $L^{2}(\mathbb{Q}_{p})$ with bounds $A,~B$. Then $\{U f_{j,~a}^{(l)} : j \in \mathbb{Z},~ a \in I_{p},~ l \in \mathfrak{L}\}$ is a multiframelet sequence with bounds $A \|U^{\dag}\|^{-2},~ B \|U\|^{2}$, for any closed range bounded, linear operator $U : L^{2}(\mathbb{Q}_{p}) \to L^{2}(\mathbb{Q}_{p})$, where $U^{\dag} : L^{2}(\mathbb{Q}_{p}) \to L^{2}(\mathbb{Q}_{p})$ for which $UU^{\dag}$ is orthogonal projection onto $\mathcal{R}_{U}$.
\end{lem}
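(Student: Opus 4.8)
The plan is to show that $\{Uf_{j,a}^{(l)} : j\in\mathbb{Z},\ a\in I_{p},\ l\in\mathfrak{L}\}$ is a multiframelet for its closed span $W:=\overline{\text{span}}\{Uf_{j,a}^{(l)} : j\in\mathbb{Z},\ a\in I_{p},\ l\in\mathfrak{L}\}$, i.e.\ to verify both inequalities of \eqref{dmf} on $W$ with the stated constants. The single identity that drives every estimate is the adjoint relation $\langle g, Uf_{j,a}^{(l)}\rangle = \langle U^{*}g, f_{j,a}^{(l)}\rangle$, which converts the frame coefficients of the image system into the frame coefficients of the original system evaluated at $U^{*}g$.

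For the upper bound I would take an arbitrary $g\in L^{2}(\mathbb{Q}_{p})$, apply this adjoint identity together with the Besselet inequality for $\{f_{j,a}^{(l)}\}$, and use $\|U^{*}\|=\|U\|$ to obtain $\sum_{l\in\mathfrak L}\sum_{j\in\mathbb Z}\sum_{a\in I_{p}} |\langle g, Uf_{j,a}^{(l)}\rangle|^{2} = \sum_{l\in\mathfrak L}\sum_{j\in\mathbb Z}\sum_{a\in I_{p}} |\langle U^{*}g, f_{j,a}^{(l)}\rangle|^{2} \le B\|U^{*}g\|^{2} \le B\|U\|^{2}\|g\|^{2}$. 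This already gives the claimed upper bound $B\|U\|^{2}$ on all of $L^{2}(\mathbb{Q}_{p})$, and in particular on $W$.

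The substantive part is the lower bound. First I would note that, since $U$ has closed range, $\mathcal{R}_{U}$ is a closed subspace containing every generator $Uf_{j,a}^{(l)}$, so $W\subseteq\mathcal{R}_{U}$; it therefore suffices to prove the lower inequality for $g\in\mathcal{R}_{U}$. Here the hypothesis on $U^{\dag}$ enters: since $UU^{\dag}$ is the orthogonal projection onto $\mathcal{R}_{U}$, we have $UU^{\dag}g=g$ for every $g\in\mathcal{R}_{U}$. Writing $\|g\|^{2}=\langle UU^{\dag}g, g\rangle = \langle U^{\dag}g, U^{*}g\rangle$ and applying Cauchy--Schwarz followed by $\|U^{\dag}g\|\le\|U^{\dag}\|\,\|g\|$ yields $\|g\|^{2}\le \|U^{\dag}\|\,\|g\|\,\|U^{*}g\|$, hence $\|U^{*}g\|\ge\|U^{\dag}\|^{-1}\|g\|$. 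Combining this with the lower multiframelet bound for $\{f_{j,a}^{(l)}\}$ applied to $U^{*}g$ gives $\sum_{l\in\mathfrak L}\sum_{j\in\mathbb Z}\sum_{a\in I_{p}} |\langle g, Uf_{j,a}^{(l)}\rangle|^{2} = \sum_{l\in\mathfrak L}\sum_{j\in\mathbb Z}\sum_{a\in I_{p}} |\langle U^{*}g, f_{j,a}^{(l)}\rangle|^{2} \ge A\|U^{*}g\|^{2} \ge A\|U^{\dag}\|^{-2}\|g\|^{2}$, which is precisely the asserted lower bound on $W$.

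The main obstacle is exactly this lower estimate, namely the passage from a frame bound on $U^{*}g$ back to a bound on $\|g\|$: without the closed-range hypothesis one cannot control $U^{*}$ from below, and the decisive device is the pseudoinverse identity $UU^{\dag}g=g$ on $\mathcal{R}_{U}$ combined with the operator-norm estimate for $U^{\dag}$. Everything else---linearity, the adjoint manipulation, and the reduction to $W\subseteq\mathcal{R}_{U}$---is routine once this inequality is established.
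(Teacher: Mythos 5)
Your proof is correct and rests on the same key device as the paper's---the pseudoinverse $U^{\dag}$ with $UU^{\dag}$ the orthogonal projection onto $\mathcal{R}_{U}$---but you deploy it differently on the lower bound. The paper proves the lower inequality only for $h$ in the \emph{algebraic} span: it writes $h=Uf$, factors $h=(UU^{\dag})^{*}Uf=(U^{\dag})^{*}U^{*}Uf$ to get $\|h\|\le\|(U^{\dag})^{*}\|\,\|U^{*}h\|$, and then must invoke Proposition \ref{dense} to pass to the closed span. You instead establish the operator inequality $\|U^{*}g\|\ge\|U^{\dag}\|^{-1}\|g\|$ for \emph{every} $g\in\mathcal{R}_{U}$ directly, via $\|g\|^{2}=\langle UU^{\dag}g,g\rangle=\langle U^{\dag}g,U^{*}g\rangle$ and Cauchy--Schwarz, and then observe that the closed span $W$ sits inside the closed subspace $\mathcal{R}_{U}$. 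This buys you two things: the density-extension step disappears entirely, and you never need to represent an element of the closure as $Uf$ (which is only automatic on the algebraic span). The trade-off is negligible---both arguments use the same hypotheses and yield the same constants $A\|U^{\dag}\|^{-2}$ and $B\|U\|^{2}$---so yours is, if anything, the cleaner route.
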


\begin{proof}
Let $g \in L^{2}(\mathbb{Q}_{p})$, then we have,
\begin{eqnarray}
\sum \limits_{l \in \mathfrak L} \sum \limits_{j \in \mathbb{Z}} \sum \limits_{a \in I_{p}}| \langle g, Uf^{(l)}_{j,a} \rangle |^{2} &=& \sum \limits_{l \in \mathfrak L} \sum \limits_{j \in \mathbb{Z}} \sum \limits_{a \in I_{p}}| \langle U^{*}g, f^{(l)}_{j,a} \rangle |^{2} \nonumber \\
&\leq&   B \left \| U^{*}g \right\|^{2} \nonumber \\
&\leq& B \left \| U^{*} \right\|^{2} \left \| g \right\|^{2} \nonumber \\
&=& B \left \| U \right\|^{2} \left \| g \right\|^{2}, \nonumber
\end{eqnarray}
which proves that $\{U f_{j,~a}^{(l)} : j \in \mathbb{Z},~ a \in I_{p},~ l \in \mathfrak{L}\}$ is a Besselet with bound $B \| U \|^{2}$. 
Let $h \in span \{ U f_{j,~a}^{(l)} : j \in \mathbb{Z},~ a \in I_{p},~ l \in \mathfrak{L} \}$. Then $h = Uf$ for some $f \in span \{ f_{j,~a}^{(l)} : j \in \mathbb{Z},~ a \in I_{p},~ l \in
\mathfrak{L} \}$. By Lemma A.7.2 of \cite{7}, there is a bounded operator $U^{\dag} : L^{2}(\mathbb{Q}_{p}) \to L^{2}(\mathbb{Q}_{p})$ such that $UU^{\dag}$ is orthogonal projection onto $\mathcal{R}_{U}$ and hence is self-adjoint.
Therefore we obtain,
$$ h = Uf = (UU^{\dag})Uf = (UU^{\dag})^{*}Uf = (U^{\dag})^{*}U^{*}Uf.$$
\begin{eqnarray} \text{Thus   } \|h\|^{2} \leq \|(U^{\dag})^{*} \|^{2} \| U^{*}Uf\|^{2} &\leq& \frac{\|(U^{\dag})^{*} \|^{2}}{A} \sum \limits_{l \in \mathfrak L} \sum \limits_{j \in \mathbb{Z}} \sum \limits_{a \in I_{p}}| \langle U^{*}Uf, f^{(l)}_{j,a} \rangle |^{2} \nonumber \\ &=& \frac{\|(U^{\dag})^{*} \|^{2}}{A} \sum \limits_{l \in \mathfrak L} \sum \limits_{j \in \mathbb{Z}} \sum \limits_{a \in I_{p}}| \langle h, Uf^{(l)}_{j,a} \rangle |^{2} \nonumber
\end{eqnarray}
\begin{equation} \label{liq} \text{and}~~\text{therefore}~~ \frac{A}{\|(U^{\dag})^{*} \|^{2}}\|h\|^{2} \leq \sum \limits_{l \in \mathfrak L} \sum \limits_{j \in \mathbb{Z}} \sum \limits_{a \in I_{p}}| \langle h, Uf^{(l)}_{j,a} \rangle |^{2}.
\end{equation}
So lower multiframelet condition satisfies for every $h \in span \{ Uf_{j,~a}^{(l)} : j \in \mathbb{Z},~ a \in I_{p},~ l \in \mathfrak{L}\}$ and hence using Proposition \ref{dense}, our assertion is tenable.
\end{proof}

In the following result we discuss a necessary and sufficient condition of a multiframelet sequence to be a multiframelet. Similar result for frame can be observed in \cite{7}.

\begin{lem}
    If $\{f_{j,~a}^{(l)} : j \in \mathbb{Z},~ a \in I_{p},~ l \in \mathfrak{L} \}$ is a multiframelet sequence in $L^{2}(\mathbb{Q}_{p})$ with the associated synthesis operator $T$, then $\{f_{j,~a}^{(l)} : j \in \mathbb{Z},~ a \in I_{p},~ l \in \mathfrak{L} \}$ is multiframelet for $L^{2}(\mathbb{Q}_{p})$ if and only if $T^*$ is injective.
\end{lem}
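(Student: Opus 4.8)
The plan is to exploit the standard Hilbert-space identification relating the kernel of the analysis operator to the orthogonal complement of the closed span. Write $H = \overline{\mathrm{span}}\{f_{j,a}^{(l)} : j \in \mathbb{Z},~ a \in I_{p},~ l \in \mathfrak{L}\}$, so that by hypothesis the sequence is a multiframelet (i.e.\ a frame) for $H$. The first step is to observe that, since the sequence is a multiframelet sequence and hence a Besselet by the earlier proposition, $T^*$ is a well-defined bounded operator on all of $L^2(\mathbb{Q}_p)$. Then $T^*g = 0$ holds if and only if $\langle g, f_{j,a}^{(l)}\rangle = 0$ for every index, which by continuity of the inner product is precisely the statement $g \in H^\perp$. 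This yields the key identity $\ker T^* = H^\perp$, and therefore $T^*$ is injective if and only if $H^\perp = \{0\}$, equivalently if and only if $H = L^2(\mathbb{Q}_p)$. This identification is the crux of the argument; once it is in place, both implications follow almost immediately.

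For the forward implication, I would suppose the sequence is a multiframelet for $L^2(\mathbb{Q}_p)$ with lower bound $A > 0$. If $T^*g = 0$, then the middle term of the multiframelet inequality (\ref{dmf}) vanishes, so $A\|g\|^2 \leq 0$, which forces $g = 0$. Hence $T^*$ is injective.

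For the converse, I would suppose $T^*$ is injective. By the observation above, $H = L^2(\mathbb{Q}_p)$. Since the sequence is assumed to be a multiframelet sequence, it is by definition a multiframelet for $H$, so the inequality (\ref{dmf}) holds for every $g \in H$; but $H$ now coincides with $L^2(\mathbb{Q}_p)$, so (\ref{dmf}) holds for all $g \in L^2(\mathbb{Q}_p)$. (The upper Besselet bound on the whole space is in any case guaranteed by the earlier proposition that every multiframelet sequence is a Besselet.) Thus the sequence is a multiframelet for $L^2(\mathbb{Q}_p)$.

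I do not expect a genuine obstacle here so much as a point requiring care: the legitimacy of the identity $\ker T^* = H^\perp$ rests on $T^*$ being a bounded operator defined on all of $L^2(\mathbb{Q}_p)$ and on the fact that $\overline{\mathcal{R}_{T}} = H$, so that passing to orthogonal complements is valid. Once this is justified, everything else is a direct reading of the frame inequality (\ref{dmf}), and no appeal to the density argument of Proposition \ref{dense} is needed, since injectivity upgrades $H$ to the entire space outright.
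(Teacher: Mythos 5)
Your proposal is correct and follows essentially the same route as the paper: both arguments reduce the claim to the identity $\ker T^{*} = \left(\overline{\mathrm{span}}\{f_{j,a}^{(l)}\}\right)^{\perp}$ (the paper obtains it via $\mathcal{N}_{T^*}=\mathcal{R}_T^{\perp}$ together with $\mathcal{R}_T$ being the closed span, you obtain it directly from the formula for $T^{*}$), and then observe that injectivity of $T^{*}$ is equivalent to the closed span being all of $L^{2}(\mathbb{Q}_{p})$. Your write-up is in fact slightly more careful than the paper's at the final step, where the full-span condition is converted back into the multiframelet inequality for all of $L^{2}(\mathbb{Q}_{p})$.
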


\begin{proof}
Since $\{f_{j,~a}^{(l)} : j \in \mathbb{Z},~ a \in I_{p},~ l \in \mathfrak{L} \}$ is a multiframelet sequence, we have,
\begin{equation*}
\mathcal{R}_{T} = \overline{span} \{f_{j,a}^{(l)}~:~j \in
\mathbb{Z},~ a \in I_{p},~ l \in \mathfrak{L} \}.
\end{equation*}
Again it is well-known that $\mathcal{N}_{T^*} = \mathcal{R}_{T}^{\perp}$, $\mathcal{R}_{T} \bigoplus \mathcal{R}_{T}^{\perp} = L^{2}(\mathbb{Q}_{p})$ and therefore we have,
\begin{equation*}
T^{*} \text{ is injective} \Leftrightarrow \mathcal{N}_{T^*} = \{0\}
\Leftrightarrow \mathcal{R}_{T}^{\perp} = \{0\} \Leftrightarrow
\mathcal{R}_{T}= L^{2}(\mathbb{Q}_{p}) .
\end{equation*}
 Thus $\{f_{j,~a}^{(l)} : j \in \mathbb{Z},~ a \in I_{p},~ l \in \mathfrak{L} \}$ is a multiframelet
for $L^{2}(\mathbb{Q}_{p})$. 

Conversely, if $\{f_{j,~a}^{(l)} : j \in \mathbb{Z},~ a \in I_{p},~ l \in \mathfrak{L} \}$ is a multiframelet
for $L^{2}(\mathbb{Q}_{p})$, then  is easily followed that $ T^{*}$ is injective.
\end{proof}

The following Theorem presents a necessary and sufficient condition of a sequence to be a multiframelet. An analogous result for frame was studied by Aldroubi  in \cite{13}. 

\begin{thm}
    Let $\{f_{j,~a}^{(l)} : j \in \mathbb{Z},~ a \in I_{p},~ l \in \mathfrak{L}\}$ be a multiframelet with bounds $A,~B$ and suppose $\mathcal{M} : L^{2}(\mathbb{Q}_{p}) \to L^{2}(\mathbb{Q}_{p})$ is a bounded linear operator. Then   $\{\mathcal{M}f_{j,~a}^{(l)} : j \in \mathbb{Z},~ a \in I_{p},~ l \in \mathfrak{L}\}$ is a multiframelet if and only if  for every $g \in L^{2}(\mathbb{Q}_{p})$, there exists $\lambda >0$ so that $\lambda \|g\|^{2} \leq \|\mathcal{M}^{*}g\|^{2}$.
    
\end{thm}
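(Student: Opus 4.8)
The plan is to transfer the operator $\mathcal{M}$ onto the test vector through its adjoint and then invoke the frame inequalities already available for $\{f_{j,a}^{(l)}\}$. First I would record the elementary identity $\langle g, \mathcal{M}f_{j,a}^{(l)}\rangle = \langle \mathcal{M}^{*}g, f_{j,a}^{(l)}\rangle$, valid for every $g\in L^{2}(\mathbb{Q}_{p})$ and every index $(l,j,a)$, and substitute it into the multiframelet sum. Applying the two-sided bound $A\|h\|^{2}\leq \sum_{l\in\mathfrak{L}}\sum_{j\in\mathbb{Z}}\sum_{a\in I_{p}}|\langle h, f_{j,a}^{(l)}\rangle|^{2}\leq B\|h\|^{2}$ with the choice $h=\mathcal{M}^{*}g$ then yields the sandwich
$$A\|\mathcal{M}^{*}g\|^{2} \leq \sum_{l\in\mathfrak{L}}\sum_{j\in\mathbb{Z}}\sum_{a\in I_{p}}|\langle g, \mathcal{M}f_{j,a}^{(l)}\rangle|^{2} \leq B\|\mathcal{M}^{*}g\|^{2},\qquad g\in L^{2}(\mathbb{Q}_{p}),$$
which is the engine of the whole argument.

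For the upper (Besselet) bound I would observe that it requires no hypothesis on $\mathcal{M}$: since $\mathcal{M}^{*}$ is bounded with $\|\mathcal{M}^{*}\|=\|\mathcal{M}\|$, the right-hand estimate gives $\sum_{l,j,a}|\langle g,\mathcal{M}f_{j,a}^{(l)}\rangle|^{2}\leq B\|\mathcal{M}\|^{2}\|g\|^{2}$, so $\{\mathcal{M}f_{j,a}^{(l)}\}$ is automatically a Besselet with bound $B\|\mathcal{M}\|^{2}$. Hence the entire content of the theorem lies in the lower bound, and the left-hand side of the sandwich shows that the lower multiframelet condition is governed precisely by a lower estimate for $\|\mathcal{M}^{*}g\|^{2}$ in terms of $\|g\|^{2}$.

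I would then settle the equivalence directly from the sandwich. If there is $\lambda>0$ with $\lambda\|g\|^{2}\leq\|\mathcal{M}^{*}g\|^{2}$ for all $g$, the left inequality gives $A\lambda\|g\|^{2}\leq\sum_{l,j,a}|\langle g,\mathcal{M}f_{j,a}^{(l)}\rangle|^{2}$, so $\{\mathcal{M}f_{j,a}^{(l)}\}$ is a multiframelet with bounds $A\lambda$ and $B\|\mathcal{M}\|^{2}$. Conversely, if $\{\mathcal{M}f_{j,a}^{(l)}\}$ is a multiframelet with lower bound $C>0$, then combining its defining lower inequality with the right-hand side of the sandwich produces $C\|g\|^{2}\leq B\|\mathcal{M}^{*}g\|^{2}$, so the desired estimate holds uniformly with $\lambda=C/B$.

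The only genuinely delicate point is the reading of the quantifier in the hypothesis: the equivalence is correct exactly when a single $\lambda$ works uniformly in $g$, which is the condition that $\mathcal{M}^{*}$ be bounded below (equivalently, that $\mathcal{M}$ be surjective); I would therefore phrase the hypothesis in this uniform form rather than with a $g$-dependent $\lambda$, since the latter asserts only injectivity of $\mathcal{M}^{*}$ and does not suffice in infinite dimensions. Beyond this interpretive caveat the steps are routine: the substitution identity and the boundedness of $\mathcal{M}^{*}$ do all the work, and I anticipate no serious obstacle.
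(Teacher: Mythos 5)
Your proposal is correct and follows essentially the same route as the paper's proof: the identity $\langle g,\mathcal{M}f_{j,a}^{(l)}\rangle=\langle\mathcal{M}^{*}g,f_{j,a}^{(l)}\rangle$ combined with the frame inequalities for $\{f_{j,a}^{(l)}\}$ gives the two-sided estimate, from which the paper likewise extracts $\lambda=C/B$ in one direction and the bounds $A\lambda$, $B\|\mathcal{M}^{*}\|^{2}$ in the other. Your caveat about the quantifier is well taken: as literally written, ``for every $g$ there exists $\lambda>0$'' asserts only injectivity of $\mathcal{M}^{*}$, which is insufficient in infinite dimensions, and the paper's own converse argument tacitly uses the uniform reading ($\exists\lambda>0$ valid for all $g$) that you make explicit.
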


\begin{proof}
Let $g \in L^{2}(\mathbb{Q}_{p})$ and suppose $\{\mathcal{M}f_{j,~a}^{(l)} : j \in \mathbb{Z},~ a \in I_{p},~ l \in \mathfrak{L}\}$ is a multiframelet with lower bound $C$. Then for every $g \in L^{2}(\mathbb{Q}_{p})$ we get,
 $$C \|g\|^{2} \leq \sum \limits_{l \in \mathfrak L} \sum \limits_{j \in \mathbb{Z}} \sum \limits_{a \in I_{p}}| \langle g, \mathcal{M}f^{(l)}_{j,a} \rangle |^{2} = \sum \limits_{l \in \mathfrak L} \sum \limits_{j \in \mathbb{Z}} \sum \limits_{a \in I_{p}}| \langle \mathcal{M}^{*}g, f^{(l)}_{j,a} \rangle |^{2} \leq B \|\mathcal{M}^{*}g \|^{2}$$ and hence our assertion is tenable by choosing $\lambda = \frac{C}{B}$.

Conversely, suppose for every $g \in L^{2}(\mathbb{Q}_{p})$, there exists $\lambda >0$ so that $\lambda \|g\|^{2} \leq \|\mathcal{M}^{*}g\|^{2}$. Then we obtain,
 $$ A \lambda \|g\|^{2} \leq A
\|\mathcal{M}^{*}g \|^{2} \leq \sum \limits_{l \in \mathfrak L} \sum
\limits_{j \in \mathbb{Z}} \sum \limits_{a \in I_{p}}| \langle
\mathcal{M}^{*}g, f^{(l)}_{j,a} \rangle |^{2} = \sum
\limits_{l \in \mathfrak L} \sum \limits_{j \in \mathbb{Z}} \sum \limits_{a
\in I_{p}}| \langle g, \mathcal{M}f^{(l)}_{j,a} \rangle |^{2} .$$
Similarly, using upper multiframelet condition we get,
 $$\sum \limits_{l \in \mathfrak L} \sum \limits_{j \in \mathbb{Z}} \sum \limits_{a \in I_{p}}| \langle g,
\mathcal{M}f^{(l)}_{j,a} \rangle |^{2} = \sum \limits_{l \in \mathfrak L} \sum
\limits_{j \in \mathbb{Z}} \sum \limits_{a \in I_{p}}| \langle
\mathcal{M}^{*}g, f^{(l)}_{j,a} \rangle |^{2} \leq B
\|\mathcal{M}^{*}g \|^{2} \leq B \|\mathcal{M}^{*}\|^{2} \|g
\|^{2}.$$ 
Therefore, $\{\mathcal{M}f_{j,~a}^{(l)} : j \in \mathbb{Z},~ a \in
I_{p},~ l \in \mathfrak{L}\}$ is a multiframelet with bounds
$A\lambda,~ B\|\mathcal{M}^{*}\|^{2}$.
\end{proof}

In the following two results we characterize multiframelets by means of erasure and perturbation.

\begin{thm}
    Let $\{f_{j,~a}^{(l)} : j \in \mathbb{Z},~ a \in I_{p},~ l \in \mathfrak{L}\}$ be a multiframelet with bounds $A,~B$. Suppose $I_q \subset I_p$ so that $\{f_{j,~a}^{(l)} : j \in \mathbb{Z},~ a \in I_{q},~ l \in \mathfrak{L}\}$ is a Besselet with bound $C$, where $C < A$. Then $\{f_{j,~a}^{(l)} : j \in \mathbb{Z},~ a \in {I_{p} \setminus I_q},~ l \in \mathfrak{L}\}$ is a multiframelet with bounds $(A-C)$ and $B$.
    
\end{thm}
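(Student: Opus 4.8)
The plan is to exploit the nonnegativity of every summand $|\langle g, f_{j,a}^{(l)}\rangle|^2$, which lets me partition the translation index set as $I_p = I_q \sqcup (I_p \setminus I_q)$ and split the multiframelet sum additively. Concretely, for every $g \in L^{2}(\mathbb{Q}_{p})$ I would record the identity
\begin{equation*}
\sum \limits_{l \in \mathfrak L} \sum \limits_{j \in \mathbb{Z}} \sum \limits_{a \in I_{p}} | \langle g, f^{(l)}_{j,a} \rangle |^{2} = \sum \limits_{l \in \mathfrak L} \sum \limits_{j \in \mathbb{Z}} \sum \limits_{a \in I_{q}} | \langle g, f^{(l)}_{j,a} \rangle |^{2} + \sum \limits_{l \in \mathfrak L} \sum \limits_{j \in \mathbb{Z}} \sum \limits_{a \in I_{p} \setminus I_{q}} | \langle g, f^{(l)}_{j,a} \rangle |^{2},
\end{equation*}
and then both inequalities for the reduced system will fall out of this identity combined with the two hypotheses.

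For the Besselet (upper) bound, I would observe that the $(I_{p} \setminus I_{q})$-sum is obtained from the full multiframelet sum by deleting the nonnegative $I_q$-terms; hence it is dominated by the full sum, which is at most $B\|g\|^{2}$ by the upper multiframelet inequality. This delivers the upper bound $B$ at once.

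The lower bound is the substantive step, though still elementary. Rearranging the displayed identity, I would express the $(I_{p} \setminus I_{q})$-sum as the difference of the full multiframelet sum and the $I_q$-sum, bounding the former below by $A\|g\|^{2}$ (lower multiframelet bound) and the latter above by $C\|g\|^{2}$ (the assumed Besselet bound on $I_q$), to obtain
\begin{equation*}
\sum \limits_{l \in \mathfrak L} \sum \limits_{j \in \mathbb{Z}} \sum \limits_{a \in I_{p} \setminus I_{q}} | \langle g, f^{(l)}_{j,a} \rangle |^{2} \geq (A - C)\|g\|^{2}.
\end{equation*}
Since the hypothesis $C < A$ forces $A - C > 0$, this is a genuine lower multiframelet bound.

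There is no real obstacle here; the only point meriting a word of care is the legitimacy of splitting the triple sum, which is justified because every term is nonnegative and the full series converges, being dominated by $B\|g\|^{2}$. With both bounds established, the reduced system $\{f_{j,~a}^{(l)} : j \in \mathbb{Z},~ a \in I_{p} \setminus I_{q},~ l \in \mathfrak{L}\}$ satisfies the multiframelet inequalities with constants $(A-C)$ and $B$, which completes the argument.
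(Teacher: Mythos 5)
Your argument is correct and coincides with the paper's own proof: both split the sum over $I_p$ into the $I_q$-part and the $(I_p\setminus I_q)$-part, obtain the lower bound $(A-C)\|g\|^2$ by subtracting the Besselet estimate from the lower multiframelet estimate, and get the upper bound $B$ by dominating the reduced sum by the full one. Your added remark justifying the splitting of the triple sum via nonnegativity and convergence is a small point of extra care not made explicit in the paper.
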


\begin{proof}
Since $\{f_{j,~a}^{(l)} : j \in \mathbb{Z},~ a \in I_{p},~ l \in \mathfrak{L}\}$ is a multiframelet with bounds $A, B$ and  $\{f_{j,~a}^{(l)} : j \in \mathbb{Z},~ a \in I_{q},~ l \in \mathfrak{L}\}$ is a Besselet with bound $C$, for every $g \in L^{2}(\mathbb{Q}_{p})$ we get,

\begin{eqnarray*}
\sum \limits_{l \in \mathfrak L} \sum \limits_{j \in \mathbb{Z}} \sum \limits_{a \in {I_p \setminus I_q}} | \langle g, f^{(l)}_{j,a} \rangle |^{2} &=& \sum \limits_{l \in \mathfrak L} \sum \limits_{j \in \mathbb{Z}} \sum \limits_{a \in {I_p }} | \langle g, f^{(l)}_{j,a} \rangle |^{2} - \sum \limits_{l \in \mathfrak L} \sum \limits_{j \in \mathbb{Z}} \sum \limits_{a \in {I_q}} | \langle g, f^{(l)}_{j,a} \rangle |^{2} 
\\ & \geq & (A-C)\|g\|^2.
\end{eqnarray*}
and $\sum \limits_{l \in \mathfrak L} \sum \limits_{j \in \mathbb{Z}} \sum \limits_{a \in {I_p \setminus I_q}} | \langle g, f^{(l)}_{j,a} \rangle |^{2} \leq \sum \limits_{l \in \mathfrak L} \sum \limits_{j \in \mathbb{Z}} \sum \limits_{a \in {I_p }} | \langle g, f^{(l)}_{j,a} \rangle |^{2} \leq B\|g\|^2.$
\end{proof}

\begin{thm}
    Let $\{f_{j,~a}^{(l)} : j \in \mathbb{Z},~ a \in I_{p},~ l \in \mathfrak{L}\}$ be a multiframelet with bounds $A,~B$. Suppose $\{g_{j,~a}^{(l)} : j \in \mathbb{Z},~ a \in I_{p},~ l \in \mathfrak{L}\}$ is another collection so that for some $0 <C<A$ and every $g \in L^{2}(\mathbb{Q}_{p})$  we have,
\begin{equation} \label{eqn}
\sum \limits_{l \in \mathfrak L} \sum \limits_{j \in \mathbb{Z}} \sum \limits_{a \in {I_p }} | \langle g, (f^{(l)}_{j,a} - g^{(l)}_{j,a})\rangle |^{2} \leq C\|g\|^2.
\end{equation}
Then $\{g_{j,~a}^{(l)} : j \in \mathbb{Z},~ a \in I_{p},~ l \in \mathfrak{L}\}$ forms a multiframelet with bounds $(\sqrt A - \sqrt C)^2, (\sqrt B + \sqrt C)^2$.  
\end{thm}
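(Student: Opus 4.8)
The plan is to read each of the three sums in the statement as the squared $\ell^2$-norm of a coefficient sequence and then to transfer the problem to the triangle inequality in $\ell^2(\mathfrak{L}\times\mathbb{Z}\times I_p)$. Fix an arbitrary $\psi\in L^2(\mathbb{Q}_p)$ and let $T_f^*\psi=\{\langle\psi,f^{(l)}_{j,a}\rangle\}$ and $T_g^*\psi=\{\langle\psi,g^{(l)}_{j,a}\rangle\}$ denote the analysis sequences of the two systems, in the notation of Section 2. By additivity of the inner product in its second argument the coefficient sequence of the difference system is precisely $T_f^*\psi-T_g^*\psi$, so hypothesis \eqref{eqn} becomes $\|T_f^*\psi-T_g^*\psi\|\le\sqrt{C}\,\|\psi\|$, while the multiframelet condition on $\{f^{(l)}_{j,a}\}$ becomes $\sqrt{A}\,\|\psi\|\le\|T_f^*\psi\|\le\sqrt{B}\,\|\psi\|$ after taking square roots.

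First I would verify that $T_g^*\psi$ genuinely lies in $\ell^2$, so that the triangle inequality is even applicable: since $T_f^*\psi\in\ell^2$ (the $f$-system is in particular a Besselet) and $T_f^*\psi-T_g^*\psi\in\ell^2$ by \eqref{eqn}, the difference $T_g^*\psi=T_f^*\psi-(T_f^*\psi-T_g^*\psi)$ lies in $\ell^2$ as well, and all three norms are finite. For the upper bound I would then estimate
\[
\|T_g^*\psi\|\le\|T_f^*\psi\|+\|T_f^*\psi-T_g^*\psi\|\le(\sqrt{B}+\sqrt{C})\,\|\psi\|,
\]
and square; note that this step already certifies that $\{g^{(l)}_{j,a}\}$ is a Besselet, which is precisely what licenses speaking of a lower bound at all. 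For the lower bound I would use the reverse triangle inequality,
\[
\|T_g^*\psi\|\ge\|T_f^*\psi\|-\|T_f^*\psi-T_g^*\psi\|\ge(\sqrt{A}-\sqrt{C})\,\|\psi\|,
\]
and square again to obtain $(\sqrt{A}-\sqrt{C})^2\|\psi\|^2\le\|T_g^*\psi\|^2$. Since $\psi$ was arbitrary, the two displayed estimates are exactly the lower and upper multiframelet inequalities with the asserted bounds $(\sqrt{A}-\sqrt{C})^2$ and $(\sqrt{B}+\sqrt{C})^2$.

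There is no serious obstacle here beyond careful bookkeeping; the entire argument is just Minkowski's inequality applied to the analysis sequences of the two systems, with the hypothesis recast as a statement about the analysis operator of the difference system. The single point that genuinely requires the standing assumption is the squaring of the lower estimate, which is legitimate only because $\sqrt{A}-\sqrt{C}\ge 0$, i.e.\ because $0<C<A$; if one had $C\ge A$ the reverse-triangle bound could become negative and no lower multiframelet bound could be extracted. I would therefore flag $0<C<A$ explicitly at the moment of squaring rather than leaving it implicit.
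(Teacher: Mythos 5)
Your proposal is correct and follows essentially the same route as the paper: both arguments recast the hypothesis as an $\ell^2$ bound on the analysis sequence of the difference system and then apply the triangle and reverse triangle inequalities to $\|T_g^*\psi\|$ before squaring. Your additional remarks --- checking that $T_g^*\psi\in\ell^2$ before invoking Minkowski, and flagging that $0<C<A$ is what makes squaring the lower estimate legitimate --- are worthwhile points of rigor that the paper leaves implicit.
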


\begin{proof}
Using equation (\ref{eqn}) for every $g \in L^{2}(\mathbb{Q}_{p})$ we obtain,
\begin{eqnarray*}
\left (\sum \limits_{l \in \mathfrak L} \sum \limits_{j \in \mathbb{Z}} \sum \limits_{a \in {I_p }} | \langle g, g^{(l)}_{j,a} \rangle |^{2} \right )^{\frac {1} {2}} &\leq & \left (\sum \limits_{l \in \mathfrak L} \sum \limits_{j \in \mathbb{Z}} \sum \limits_{a \in {I_p }} | \langle g, (f^{(l)}_{j,a} - g^{(l)}_{j,a} ) \rangle |^{2}   \right )^{\frac {1} {2}} 
\\ &+& \left (\sum \limits_{l \in \mathfrak L} \sum \limits_{j \in \mathbb{Z}} \sum \limits_{a \in {I_p }} | \langle g, f^{(l)}_{j,a} \rangle |^{2} \right )^{\frac {1} {2}}
\\ & \leq &  (\sqrt B+ \sqrt C)\|g\|.
\end{eqnarray*}
Therefore, $\sum \limits_{l \in \mathfrak L} \sum \limits_{j \in \mathbb{Z}} \sum \limits_{a \in {I_p }} | \langle g, g^{(l)}_{j,a} \rangle |^{2} \leq (\sqrt B + \sqrt C)^2 \|g\|^2$.

Analogously using equation (\ref{eqn}) for every $g \in L^{2}(\mathbb{Q}_{p})$ we get, $$\sum \limits_{l \in \mathfrak L} \sum \limits_{j \in \mathbb{Z}} \sum \limits_{a \in {I_p }} | \langle g, g^{(l)}_{j,a} \rangle |^{2} \geq (\sqrt A - \sqrt C)^2 \|g\|^2.$$
Hence our assertion is tenable.
\end{proof}

Duffin and Schaeffer studied properties of frame operator on $\mathbb R$ in \cite{7, 1}. Later, Heil continued to study the same in general Hilbert space (cf. \cite{11}) and Debnath independently studied this in general Hilbert space (cf. \cite{14}). In the rest of this section we produce various characterizations of multiframelet by means of associated multiframelet operator $\mathcal{S}$.

\begin{prop}
    If $\{f^{(l)}_{j,a} : j \in \mathbb{Z},~ a \in I_{p},~ l \in \mathfrak{L} \}$ is a multiframelet with bounds $A, B$ and the associated multiframelet operator $\mathcal{S}$, then $\{\mathcal{S}^{-1}f^{(l)}_{j,a} : j \in \mathbb{Z},~ a \in I_{p},~ l \in \mathfrak{L} \}$ is a multiframelet with bounds $B^{-1} ,~ A^{-1}$. Furthermore, if $A,~B$ are optimal bounds for $\{ f^{(l)}_{j,a} : j \in \mathbb{Z},~ a \in I_{p},~ l \in \mathfrak{L} \}$ then $B^{-1} ,~ A^{-1}$ are optimal bounds for $\{\mathcal{S}^{-1}f^{(l)}_{j,a} : j \in \mathbb{Z},~ a \in I_{p},~ l \in \mathfrak{L} \}$ whose multiframelet operator is $\mathcal{S}^{-1}$.
\end{prop}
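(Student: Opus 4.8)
The plan is to exploit the fact, recorded in the earlier Remark, that the multiframelet operator $\mathcal{S}$ is bounded, self-adjoint, positive and bijective, so that the multiframelet inequality can be recast as the operator inequality $A\mathcal{I} \leq \mathcal{S} \leq B\mathcal{I}$. First I would observe that $\mathcal{S}^{-1}$ inherits self-adjointness and positivity, and that inverting this operator inequality yields $B^{-1}\mathcal{I} \leq \mathcal{S}^{-1} \leq A^{-1}\mathcal{I}$. Evaluating against an arbitrary $g \in L^{2}(\mathbb{Q}_{p})$ then gives $B^{-1}\|g\|^{2} \leq \langle \mathcal{S}^{-1}g, g\rangle \leq A^{-1}\|g\|^{2}$, which is precisely the multiframelet inequality with bounds $B^{-1}, A^{-1}$, provided one can identify $\langle \mathcal{S}^{-1}g, g\rangle$ with the sum $\sum_{l \in \mathfrak{L}}\sum_{j \in \mathbb{Z}}\sum_{a \in I_{p}} |\langle g, \mathcal{S}^{-1}f^{(l)}_{j,a}\rangle|^{2}$.

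The crucial step is therefore to compute the multiframelet operator $\widetilde{\mathcal{S}}$ associated with $\{\mathcal{S}^{-1}f^{(l)}_{j,a}\}$ and to show that it equals $\mathcal{S}^{-1}$. Using the self-adjointness of $\mathcal{S}^{-1}$ to move it across the inner product, I would write
$$\widetilde{\mathcal{S}}g = \sum_{l \in \mathfrak{L}}\sum_{j \in \mathbb{Z}}\sum_{a \in I_{p}} \langle g, \mathcal{S}^{-1}f^{(l)}_{j,a}\rangle\, \mathcal{S}^{-1}f^{(l)}_{j,a} = \mathcal{S}^{-1}\left(\sum_{l \in \mathfrak{L}}\sum_{j \in \mathbb{Z}}\sum_{a \in I_{p}} \langle \mathcal{S}^{-1}g, f^{(l)}_{j,a}\rangle\, f^{(l)}_{j,a}\right) = \mathcal{S}^{-1}\mathcal{S}\mathcal{S}^{-1}g = \mathcal{S}^{-1}g,$$
where the inner sum is recognized as $\mathcal{S}(\mathcal{S}^{-1}g)$. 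Pulling $\mathcal{S}^{-1}$ outside the summation is legitimate because it is bounded and the series converges, the latter being guaranteed by the multiframelet hypothesis. This single identity simultaneously yields the frame property, the explicit bounds $B^{-1}, A^{-1}$, and the concluding assertion that the multiframelet operator of the dual system is $\mathcal{S}^{-1}$.

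For the optimality claim I would invoke the standard characterization that, for a positive invertible self-adjoint operator, the optimal upper multiframelet bound equals the operator norm $\|\mathcal{S}\|$ while the optimal lower bound equals $\|\mathcal{S}^{-1}\|^{-1}$ (equivalently, the extremities of the spectrum of $\mathcal{S}$). If $A, B$ are optimal, then $B = \|\mathcal{S}\|$ and $A = \|\mathcal{S}^{-1}\|^{-1}$; applying the same characterization to $\mathcal{S}^{-1}$ produces optimal upper bound $\|\mathcal{S}^{-1}\| = A^{-1}$ and optimal lower bound $\|\mathcal{S}\|^{-1} = B^{-1}$, exactly as claimed.

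The main obstacle I anticipate is this optimality argument, since it hinges on tying optimal multiframelet bounds to the operator-norm and spectral data of $\mathcal{S}$ — a link not spelled out in the preliminaries, which one must justify either through the supremum/infimum characterization of $\langle \mathcal{S}g, g\rangle/\|g\|^{2}$ or via the spectral mapping $\sigma(\mathcal{S}^{-1}) = \{\lambda^{-1} : \lambda \in \sigma(\mathcal{S})\}$. By contrast, the frame-property portion is entirely routine once the operator identity $\widetilde{\mathcal{S}} = \mathcal{S}^{-1}$ is established.
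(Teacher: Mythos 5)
Your argument for the main assertion is correct and follows essentially the same route as the paper: recast the frame inequality as $A\mathcal I \leq \mathcal S \leq B\mathcal I$, invert it to get $B^{-1}\mathcal I \leq \mathcal S^{-1} \leq A^{-1}\mathcal I$, and establish the operator identity $\widetilde{\mathcal S}=\mathcal S^{-1}$ by pulling the bounded operator $\mathcal S^{-1}$ through the synthesis sum and using self-adjointness; the paper's computation of $\widetilde{\mathcal S}$ is line-for-line the one you propose. The only presentational difference in this part is that the paper first verifies explicitly that $\{\mathcal S^{-1}f^{(l)}_{j,a}\}$ is a Besselet (with bound $B\|\mathcal S^{-1}\|^{2}$) so that $\widetilde{\mathcal S}$ is well defined before manipulating it, whereas you dispose of convergence with a one-line remark; your justification is adequate but worth writing out.

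Where you genuinely diverge is the optimality claim. The paper argues by contradiction and bootstraps the first half of the proposition: if the optimal lower bound of $\{\mathcal S^{-1}f^{(l)}_{j,a}\}$ were some $D>B^{-1}$, then applying the already-proved statement to that system (whose multiframelet operator is $\mathcal S^{-1}$) would give $\{f^{(l)}_{j,a}\}=\{(\mathcal S^{-1})^{-1}\mathcal S^{-1}f^{(l)}_{j,a}\}$ an upper bound $D^{-1}<B$, contradicting optimality of $B$; the other bound is symmetric. You instead identify the optimal upper and lower bounds with $\|\mathcal S\|$ and $\|\mathcal S^{-1}\|^{-1}$ (equivalently the endpoints of $\sigma(\mathcal S)$) and apply the spectral mapping $\sigma(\mathcal S^{-1})=\{\lambda^{-1}:\lambda\in\sigma(\mathcal S)\}$. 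Both are valid. Your route is more conceptual and explains \emph{why} the bounds invert, but it imports spectral-theoretic facts (that $\sup_{\|g\|=1}\langle\mathcal Sg,g\rangle=\|\mathcal S\|=\max\sigma(\mathcal S)$ and $\inf_{\|g\|=1}\langle\mathcal Sg,g\rangle=\min\sigma(\mathcal S)=\|\mathcal S^{-1}\|^{-1}$ for a positive self-adjoint invertible operator) that, as you note, are not set up in the preliminaries and would need to be proved or cited; the paper's contradiction argument is self-contained and uses nothing beyond the first half of the proposition, at the cost of being less illuminating.
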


\begin{proof} Since $\{f^{(l)}_{j,a} : j \in \mathbb{Z},~ a \in I_{p},~ l \in \mathfrak{L} \}$ is a multiframelet with bounds $A, B$ and the associated multiframelet operator $\mathcal{S}$ we have, 
\begin{equation} \label{mul_operator} A \mathcal{I} \leq \mathcal{S} \leq B \mathcal{I} \Rightarrow 0 \leq \mathcal{I} - B^{-1}\mathcal{S} \leq \frac{B-A}{A} \mathcal{I}.
\end{equation}
Hence $\|\mathcal{I} - B^{-1}\mathcal{S} \| < 1$ and consequently  $\mathcal{S}$ is invertible. Therefore, for every $g \in L^{2}(\mathbb{Q}_{p})$ we obtain,
$$\sum \limits_{l \in \mathfrak L} \sum \limits_{j \in \mathbb{Z}} \sum \limits_{a \in I_{p}} |\langle g, \mathcal{S}^{-1}f^{(l)}_{j,a} \rangle|^{2} = \sum \limits_{l \in \mathfrak L} \sum \limits_{j \in \mathbb{Z}} \sum \limits_{a \in I_{p}} |\langle \mathcal{S}^{-1}g, f^{(l)}_{j,a} \rangle|^{2} \leq B \|\mathcal{S}^{-1}g\|^{2} \leq B \|\mathcal{S}^{-1}\|^{2} \|g\|^{2}.$$ 
Thus $\{\mathcal{S}^{-1}f^{(l)}_{j,a} : j \in \mathbb{Z},~ a \in I_{p},~ l \in \mathfrak{L} \}$ is a Besselet and the corresponding  multiframelet operator for $\{\mathcal{S}^{-1}f^{(l)}_{j,a} : j \in \mathbb{Z},~ a \in I_{p},~ l \in \mathfrak{L} \}$ is well-defined.
For every $g \in L^{2}(\mathbb{Q}_{p})$ we  have,
\begin{eqnarray*} \sum \limits_{l \in \mathfrak L} \sum \limits_{j \in \mathbb{Z}} \sum
\limits_{a \in I_{p}} \langle g, \mathcal{S}^{-1}f^{(l)}_{j,a} \rangle \mathcal{S}^{-1}f^{(l)}_{j,a} &=& \mathcal{S}^{-1} \sum \limits_{l \in \mathfrak L} \sum \limits_{j \in \mathbb{Z}} \sum \limits_{a
\in I_{p}} \langle g, \mathcal{S}^{-1}f^{(l)}_{j,a} \rangle f^{(l)}_{j,a}
\\ &= & \mathcal{S}^{-1} \sum \limits_{l \in \mathfrak L} \sum \limits_{j \in \mathbb{Z}} \sum \limits_{a \in I_{p}} \langle \mathcal{S}^{-1}g, f^{(l)}_{j,a} \rangle f^{(l)}_{j,a}
\\ & = & \mathcal{S}^{-1}\mathcal{S}\mathcal{S}^{-1}g
\\ & = & \mathcal{S}^{-1}g.
\end{eqnarray*}
Therefore, the multiframelet operator for $\{\mathcal{S}^{-1}f^{(l)}_{j,a} : j \in \mathbb{Z},~ a \in I_{p},~ l \in \mathfrak{L} \}$ is $\mathcal{S}^{-1}$. Thus applying equation (\ref{mul_operator}), for every $g \in L^{2}(\mathbb{Q}_{p})$ we obtain,
 $B^{-1}\|g\|^{2} \leq \langle \mathcal{S}^{-1}g,g \rangle \leq A^{-1} \|g\|^{2}$ and therefore $\{\mathcal{S}^{-1}f^{(l)}_{j,a} : j \in \mathbb{Z},~ a \in I_{p},~ l \in \mathfrak{L} \}$ is a multiframelet with bounds $B^{-1}, A^{-1}$.

In order to prove the optimality of bounds, let $B$ be the optimal upper bound for the multiframelet $\{f^{(l)}_{j,a} : j \in \mathbb{Z},~ a \in I_{p},~ l \in \mathfrak{L} \}$ and assume that the optimal
lower bound for $\{\mathcal{S}^{-1}f^{(l)}_{j,a} : j \in \mathbb{Z},~ a \in I_{p},~ l \in \mathfrak{L} \}$ is $D > B^{-1}$. Then $\{f^{(l)}_{j,a} : j \in \mathbb{Z},~ a \in I_{p},~ l \in \mathfrak{L} \}=
\{(\mathcal{S}^{-1})^{-1}\mathcal{S}^{-1}f^{(l)}_{j,a} : j \in \mathbb{Z},~ a \in I_{p},~ l \in \mathfrak{L} \}$ has the upper bound $D^{-1}<B$, which is a contradiction. Hence
$\{\mathcal{S}^{-1}f^{(l)}_{j,a} : j \in \mathbb{Z},~ a \in I_{p},~l \in \mathfrak{L} \}$ has the optimal lower bound $B^{-1}$. By applying similar argument, the optimal upper bound will be achieved.
\end{proof}

$\{\mathcal{S}^{-1}f^{(l)}_{j,a} : j \in \mathbb{Z},~ a \in I_{p},~ l \in \mathfrak{L} \}$ is called the \textbf{canonical dual multiframelet} of $\{f^{(l)}_{j,a} : j \in \mathbb{Z},~ a \in I_{p},~ l \in \mathfrak{L} \}$. Note that
$\{\mathcal{S}f^{(l)}_{j,a} : j \in \mathbb{Z},~ a \in I_{p},~ l \in \mathfrak{L} \}$ is also a multiframelet.


\begin{thm} \label{decom} (Multiframelet decomposition)
    Let $\mathcal {F}$=$\{f^{(1)}, \ldots, f^{(L)}\}$ be a multiframelet for $L^{2}(\mathbb{Q}_{p})$ with the corresponding multiframelet operator $\mathcal{S}$. Then every  $g \in  L^{2}(\mathbb{Q}_{p})$ has the following representation
    \begin{equation*}
    g = \sum \limits_{l \in \mathfrak L} \sum \limits_{j \in \mathbb{Z}} \sum \limits_{a \in I_{p}} \langle g , \mathcal{S}^{-1} f_{j,a}^{(l)} \rangle f_{j,a}^{(l)} = \sum \limits_{l \in \mathfrak L} \sum \limits_{j \in \mathbb{Z}} \sum \limits_{a \in I_{p}} \langle g, f_{j,a}^{(l)} \rangle \mathcal{S}^{-1} f_{j,a}^{(l)}, 
    \end{equation*}
the above sums converges unconditionally.
\end{thm}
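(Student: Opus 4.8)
The plan is to exploit the invertibility and self-adjointness of the multiframelet operator $\mathcal{S}$, both of which are already in hand. The Remark following the definition of $\mathcal{S}$ establishes that $\mathcal{S}$ is a bounded, linear, self-adjoint, bijective operator, and the proof of the preceding Proposition shows $\|\mathcal{I}-B^{-1}\mathcal{S}\|<1$, so that $\mathcal{S}^{-1}$ exists as a bounded operator on $L^{2}(\mathbb{Q}_{p})$. Since $\mathcal{S}$ is self-adjoint and bijective, $(\mathcal{S}^{-1})^{*}=(\mathcal{S}^{*})^{-1}=\mathcal{S}^{-1}$, so $\mathcal{S}^{-1}$ is itself self-adjoint. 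The two claimed representations will then follow by factoring the identity in the two obvious ways, $g=\mathcal{S}\mathcal{S}^{-1}g=\mathcal{S}^{-1}\mathcal{S}g$, and expanding the inner copy of $\mathcal{S}$ by means of its defining series $\mathcal{S}h=\sum_{l\in\mathfrak L}\sum_{j\in\mathbb Z}\sum_{a\in I_p}\langle h,f^{(l)}_{j,a}\rangle f^{(l)}_{j,a}$.

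For the first identity I would start from $g=\mathcal{S}(\mathcal{S}^{-1}g)$ and substitute the defining formula for $\mathcal{S}$ applied to the vector $\mathcal{S}^{-1}g$, obtaining $g=\sum_{l\in\mathfrak L}\sum_{j\in\mathbb Z}\sum_{a\in I_p}\langle \mathcal{S}^{-1}g,f^{(l)}_{j,a}\rangle f^{(l)}_{j,a}$; moving $\mathcal{S}^{-1}$ across the inner product by self-adjointness gives $\langle \mathcal{S}^{-1}g,f^{(l)}_{j,a}\rangle=\langle g,\mathcal{S}^{-1}f^{(l)}_{j,a}\rangle$, which is precisely the first displayed expansion. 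For the second identity I would instead write $g=\mathcal{S}^{-1}(\mathcal{S}g)=\mathcal{S}^{-1}\bigl(\sum_{l\in\mathfrak L}\sum_{j\in\mathbb Z}\sum_{a\in I_p}\langle g,f^{(l)}_{j,a}\rangle f^{(l)}_{j,a}\bigr)$ and pull $\mathcal{S}^{-1}$ inside the series. Since $\mathcal{S}^{-1}$ is bounded and hence continuous, this interchange is legitimate and yields $g=\sum_{l\in\mathfrak L}\sum_{j\in\mathbb Z}\sum_{a\in I_p}\langle g,f^{(l)}_{j,a}\rangle \mathcal{S}^{-1}f^{(l)}_{j,a}$, which is the second displayed expansion.

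Finally, for unconditional convergence I would invoke the Besselet property. For any $h\in L^{2}(\mathbb{Q}_{p})$ the coefficient sequence $\{\langle h,f^{(l)}_{j,a}\rangle\}$ lies in $\ell^{2}(\mathfrak L\times\mathbb Z\times I_p)$ by the upper multiframelet bound, and the synthesis operator $T$ is bounded on this space; since a bounded linear operator carries an unconditionally convergent series in $\ell^{2}$ to an unconditionally convergent series, the expansion $\sum \langle h,f^{(l)}_{j,a}\rangle f^{(l)}_{j,a}=T(\{\langle h,f^{(l)}_{j,a}\rangle\})$ converges unconditionally. Applying this with $h=\mathcal{S}^{-1}g$ settles the first sum, and using that the canonical dual $\{\mathcal{S}^{-1}f^{(l)}_{j,a}\}$ is itself a multiframelet (hence a Besselet with a bounded synthesis operator) together with $\{\langle g,f^{(l)}_{j,a}\rangle\}\in\ell^{2}$ settles the second. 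I expect the only point requiring genuine care to be the interchange of $\mathcal{S}^{-1}$ with the infinite series in the second representation, but this is immediate from the continuity of $\mathcal{S}^{-1}$, so the argument poses no substantive obstacle.
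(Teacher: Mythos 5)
Your proposal is correct and follows essentially the same route as the paper: factoring the identity as $g=\mathcal{S}\mathcal{S}^{-1}g=\mathcal{S}^{-1}\mathcal{S}g$, moving $\mathcal{S}^{-1}$ across the inner product by self-adjointness for the first expansion, and pulling the bounded operator $\mathcal{S}^{-1}$ through the series for the second. Your treatment of unconditional convergence via the $\ell^{2}$ coefficient sequence and the bounded synthesis operator is in fact more explicit than the paper's one-line appeal to both families being multiframelets.
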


\begin{proof} Let $g \in L^{2}(\mathbb{Q}_{p})$, the we have,
\begin{equation*}
g = \mathcal{S}\mathcal{S}^{-1}g = \sum \limits_{l \in \mathfrak L} \sum
\limits_{j \in \mathbb{Z}} \sum \limits_{a \in I_{p}} \langle
\mathcal{S}^{-1}g,  f_{j,a}^{(l)} \rangle f_{j,a}^{(l)} = \sum
\limits_{l \in \mathfrak L} \sum \limits_{j \in \mathbb{Z}} \sum \limits_{a
\in I_{p}} \langle g, \mathcal{S}^{-1} f_{j,a}^{(l)} \rangle
f_{j,a}^{(l)}.
\end{equation*}
Similarly, we have following representation
\begin{eqnarray}
g = \mathcal{S}^{-1}\mathcal{S}g &=& \mathcal{S}^{-1}\sum
\limits_{l \in \mathfrak L} \sum \limits_{j \in \mathbb{Z}} \sum \limits_{a
\in I_{p}} \langle g ,  f_{j,a}^{(l)} \rangle f_{j,a}^{(l)}
\nonumber = \sum \limits_{l \in \mathfrak L} \sum \limits_{j \in
\mathbb{Z}} \sum \limits_{a \in I_{p}} \langle g , f_{j,a}^{(l)}
\rangle \mathcal{S}^{-1} f_{j,a}^{(l)}
\end{eqnarray}
The unconditionally convergence follows from the fact that both $\{f^{(l)}_{j,a} : j \in \mathbb{Z},~ a \in I_{p},~ l \in \mathfrak{L} \}$ and $\{\mathcal{S}^{-1}f^{(l)}_{j,a} : j \in \mathbb{Z},~ a \in I_{p},~ l \in \mathfrak{L} \}$ are multiframelet.
\end{proof}


\begin{rem}
    Theorem (\ref{decom}) also proves that $\mathcal{S}$ is surjective and therefore a topological isomorphism of $L^{2}(\mathbb{Q}_{p})$. If $\mathcal {F}$ is a tight multiframelet, then $\mathcal{S}^{-1}=A^{-1} \mathcal{I}$, which leads to another representation of $g$ as $g = A^{-1} \sum \limits_{l \in \mathfrak L} \sum \limits_{j \in \mathbb{Z}} \sum \limits_{a \in I_{p}} \langle g , f_{j,a}^{(l)} \rangle f_{j,a}^{(l)}$. Note that $A$ is an eigen value of $\mathcal{S}$. Also by multiframelet decomposition theorem, it can be concluded that a multiframelet can be represented by other multiframelet in $L^{2}(\mathbb{Q}_{p})$.
\end{rem}

\begin{thm}
    Let $\{f_{j,~a}^{(l)} : j \in \mathbb{Z},~ a \in I_{p},~ l \in \mathfrak{L} \}$ and $\{g_{j,~a}^{(l)} : j \in \mathbb{Z},~ a \in I_{p},~ l \in \mathfrak{L} \}$ be Besselets in $L^{2}(\mathbb{Q}_{p})$. Then for every $f, g \in L^{2}(\mathbb{Q}_{p})$ the following statements are equivalent :
    \begin{itemize}
        \item[(i)]  $f= \sum \limits_{l \in \mathfrak L} \sum \limits_{j \in \mathbb{Z}} \sum \limits_{a \in I_{p}} \langle f, f_{j,a}^{(l)} \rangle  g_{j,a}^{(l)}$.
        \item[(ii)] $f= \sum \limits_{l \in \mathfrak L} \sum \limits_{j \in \mathbb{Z}} \sum \limits_{a \in I_{p}} \langle f , g_{j,a}^{(l)} \rangle  f_{j,a}^{(l)}$.
        \item[(iii)] $\langle f,g \rangle = \sum \limits_{l \in \mathfrak L} \sum \limits_{j \in \mathbb{Z}} \sum \limits_{a \in I_{p}} \langle f ,  g_{j,a}^{(l)} \rangle  \langle f_{j,a}^{(l)}, g \rangle$.
    \end{itemize}
    Furthermore, if one of the above equivalent conditions is satisfied, then  $\{f_{j,~a}^{(l)} : j \in \mathbb{Z},~ a \in I_{p},~ l \in \mathfrak{L} \}$ and $\{g_{j,~a}^{(l)} : j \in \mathbb{Z},~ a \in I_{p},~ l \in \mathfrak{L} \}$ are dual multiframelets for $L^{2}(\mathbb{Q}_{p})$.
\end{thm}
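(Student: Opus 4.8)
The plan is to recast the three conditions as operator identities and then exploit duality. Let $T_f$ and $T_g$ denote the synthesis operators associated with $\{f_{j,a}^{(l)}\}$ and $\{g_{j,a}^{(l)}\}$ respectively; since both families are Besselets, these operators are bounded and their adjoints are the analysis operators $T_f^{*}h = \{\langle h, f_{j,a}^{(l)}\rangle\}$ and $T_g^{*}h = \{\langle h, g_{j,a}^{(l)}\rangle\}$. Before anything else I would record that every series appearing in (i)--(iii) converges absolutely: by the Cauchy--Schwarz inequality in $\ell^{2}(\mathfrak L \times \mathbb{Z} \times I_{p})$ together with the two Besselet bounds $B_f, B_g$, each such sum is dominated by $\sqrt{B_f B_g}\,\|f\|\,\|g\|$, so inner products may be interchanged with the summations by continuity. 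With this in hand, (i) reads $T_g T_f^{*} = \mathcal I$, (ii) reads $T_f T_g^{*} = \mathcal I$, and (iii) reads $\langle T_f T_g^{*} f, g\rangle = \langle f, g\rangle$ for all $f,g$.

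Next I would prove the equivalences in this language. The equivalence (ii)$\Leftrightarrow$(iii) is immediate, since (iii) is precisely the weak formulation of (ii): a bounded operator on $L^{2}(\mathbb{Q}_{p})$ is determined by its inner products, so $\langle T_f T_g^{*} f, g\rangle = \langle f, g\rangle$ for all $f,g$ holds if and only if $T_f T_g^{*} = \mathcal I$. The equivalence (i)$\Leftrightarrow$(ii) then follows by taking adjoints: because $(T_g T_f^{*})^{*} = T_f T_g^{*}$ and $\mathcal I^{*} = \mathcal I$, the identity $T_g T_f^{*} = \mathcal I$ holds if and only if $T_f T_g^{*} = \mathcal I$. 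This gives the full three-way equivalence with no further computation.

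For the final assertion, suppose one---hence all---of (i)--(iii) holds; I must upgrade the two Besselets to multiframelets and identify them as mutual duals. Setting $g = f$ in (iii) gives $\|f\|^{2} = \sum \langle f, g_{j,a}^{(l)}\rangle\,\langle f_{j,a}^{(l)}, f\rangle$, and applying Cauchy--Schwarz together with the upper Besselet bound of the $g$-family yields $\|f\|^{2} \le \sqrt{B_g}\,\|f\|\,\bigl(\sum |\langle f, f_{j,a}^{(l)}\rangle|^{2}\bigr)^{1/2}$, whence $B_g^{-1}\|f\|^{2} \le \sum |\langle f, f_{j,a}^{(l)}\rangle|^{2}$. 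Thus $\{f_{j,a}^{(l)}\}$ satisfies a lower multiframelet bound and, being already a Besselet, is a multiframelet. The same identity, this time bounding the other factor by $\sqrt{B_f}$, gives $B_f^{-1}\|f\|^{2} \le \sum |\langle f, g_{j,a}^{(l)}\rangle|^{2}$, so $\{g_{j,a}^{(l)}\}$ is a multiframelet as well. Since (i) and (ii) are exactly the two reconstruction formulas in the definition of dual multiframelet, the two families are dual multiframelets for $L^{2}(\mathbb{Q}_{p})$.

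I expect the genuinely substantive step to be this last one---extracting the lower bounds from (iii) via Cauchy--Schwarz---because the equivalences themselves are essentially formal once the operator identities are in place. The only other point demanding care is the interchange of sums with inner products, which is handled uniformly by the Besselet bounds recorded at the outset.
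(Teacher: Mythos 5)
Your proposal is correct and follows essentially the same route as the paper: recasting (i)--(iii) as the operator identities $T_gT_f^{*}=\mathcal I$ and $T_fT_g^{*}=\mathcal I$, obtaining (i)$\Leftrightarrow$(ii) by taking adjoints, (ii)$\Leftrightarrow$(iii) via the weak formulation, and extracting the lower multiframelet bound from (iii) with $g=f$ by Cauchy--Schwarz against the Besselet bound. The only (harmless) differences are that you carry out the lower-bound argument symmetrically for both families, where the paper writes it out only for $\{f_{j,a}^{(l)}\}$, and that the vector-valued series in (i)--(ii) should be described as well-defined by boundedness of the synthesis operators (unconditional convergence) rather than as absolutely convergent.
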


\begin{proof}
Let $U$ and $T$ be synthesis operators corresponding to $\{f_{j,~a}^{(l)} : j \in \mathbb{Z},~ a \in I_{p},~ l \in \mathfrak{L} \}$ and $\{g_{j,~a}^{(l)} : j \in \mathbb{Z},~ a \in I_{p},~ l \in \mathfrak{L} \}$ respectively.\\

$\underline{(i) \Longleftrightarrow (ii)}$  $(i)$ is equivalent to, $TU^* = \mathcal{I} \Leftrightarrow (TU^*)^* = \mathcal{I}^* \Leftrightarrow UT^* = \mathcal{I}$, which is equivalent to $(ii)$.\\ \\
$\underline{(ii) \Longrightarrow (iii)}$ Let $f,g \in
L^{2}(\mathbb{Q}_{p})$. Now
$$\langle f,g \rangle = \left \langle \sum \limits_{l \in \mathfrak L} \sum \limits_{j \in \mathbb{Z}} \sum \limits_{a \in I_{p}} \langle f , g_{j,a}^{(l)} \rangle   f_{j,a}^{(l)}, ~ g \right \rangle = \sum \limits_{l \in \mathfrak L} \sum \limits_{j \in \mathbb{Z}} \sum \limits_{a \in I_{p}} \langle f , g_{j,a}^{(l)} \rangle  \langle f_{j,a}^{(l)},~ g \rangle.$$\\
$\underline{(iii) \Longrightarrow (ii)}$
Let $f \in L^{2}(\mathbb{Q}_{p})$. Then $\sum \limits_{l \in \mathfrak L} \sum \limits_{j \in \mathbb{Z}} \sum \limits_{a \in I_{p}} \langle f ~,~ g_{j,a}^{(l)} \rangle  f_{j,a}^{(l)}$ is well-defined  in $L^{2}(\mathbb{Q}_{p})$. Therefore for every $g \in L^{2}(\mathbb{Q}_{p})$ we obtain,
$\langle f,g \rangle = \sum \limits_{l \in \mathfrak L} \sum \limits_{j \in \mathbb{Z}} \sum \limits_{a \in I_{p}} \langle f , g_{j,a}^{(l)} \rangle  \langle f_{j,a}^{(l)},  g \rangle$ and hence
$ \langle f - \sum \limits_{l \in \mathfrak L} \sum \limits_{j \in \mathbb{Z}} \sum \limits_{a \in I_{p}} \langle f , g_{j,a}^{(l)} \rangle f_{j,a}^{(l)},  g \rangle =0$ and consequently, $ f = \sum \limits_{l \in \mathfrak L} \sum \limits_{j \in \mathbb{Z}} \sum \limits_{a \in I_{p}} \langle f , g_{j,a}^{(l)} \rangle f_{j,a}^{(l)}$.
Furthermore, if one of the above equivalent conditions is satisfied, then considering
$\{g_{j,~a}^{(l)} : j \in \mathbb{Z},~ a \in I_{p},~ l \in
\mathfrak{L} \}$ is Besselet with bound $B$, we obtain,
\begin{eqnarray}
\|f\|^{2} &=& \sum \limits_{l \in \mathfrak L} \sum \limits_{j \in \mathbb{Z}}
\sum \limits_{a \in I_{p}} \langle f , g_{j,a}^{(l)} \rangle
\langle f_{j,a}^{(l)}~,~ f \rangle \nonumber \\ &\leq& \left(\sum
\limits_{l \in \mathfrak L} \sum \limits_{j \in \mathbb{Z}} \sum \limits_{a
\in I_{p}} |\langle f , g_{j,a}^{(l)} \rangle |^{2}  \right
)^\frac{1}{2}  \left(\sum \limits_{l \in \mathfrak L} \sum \limits_{j \in
\mathbb{Z}} \sum \limits_{a \in I_{p}} |\langle f_{j,a}^{(l)}, f
\rangle |^{2}  \right )^\frac{1}{2} (\text{by cauchy-schwarz })
\nonumber \\  &\leq& \sqrt{B} ~ \|f\|~ \left( \sum \limits_{l \in \mathfrak L}
\sum \limits_{j \in \mathbb{Z}} \sum \limits_{a \in I_{p}} |\langle
f_{j,a}^{(l)}, f \rangle |^{2}  \right )^\frac{1}{2}  \nonumber
\end{eqnarray}
and hence $\frac{1}{B} \|f\|^{2} \leq \sum \limits_{l \in \mathfrak L} \sum
\limits_{j \in \mathbb{Z}} \sum \limits_{a \in I_{p}} |\langle
f_{j,a}^{(l)}, f \rangle |^{2}$. Therefore, $\{f_{j,~a}^{(l)} : j \in
\mathbb{Z},~ a \in I_{p},~ l \in \mathfrak{L} \}$ is a multiframelet.
\end{proof}

\begin{thm}
    If $\{f_{j,~a}^{(l)} : j \in \mathbb{Z},~ a \in I_{p},~ l \in \mathfrak{L} \}$ is a multiframelet for $L^{2}(\mathbb{Q}_{p})$, then $\{f_{j,~a}^{(l)} : j \in \mathbb{Z},~ a \in I_{p},~ l \in \mathfrak{L} \}$ is also a tight multiframelet if and only if  for some $\alpha > 0$, $\{\alpha f_{j,~a}^{(l)} : j \in \mathbb{Z},~ a \in I_{p},~ l \in \mathfrak{L} \}$ is a dual of $\{f_{j,~a}^{(l)} : j \in \mathbb{Z},~ a \in I_{p},~ l \in \mathfrak{L} \}$.
\end{thm}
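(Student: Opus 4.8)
The plan is to translate both tightness and the stated duality into statements about the multiframelet operator $\mathcal{S}$, and then observe that the two operator identities coincide up to reciprocal scaling. The hinge is the following reformulation of tightness: $\{f_{j,a}^{(l)}\}$ is tight exactly when one may take $A=B$ in (\ref{dmf}); since $A\|g\|^{2}\le\langle\mathcal{S}g,g\rangle\le B\|g\|^{2}$, the choice $A=B$ forces $\langle\mathcal{S}g,g\rangle=A\|g\|^{2}$ for all $g$, and by self-adjointness of $\mathcal{S}$ this is equivalent to the operator identity $\mathcal{S}=A\mathcal{I}$. Once tightness is recast this way, both directions become short.

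For the forward implication, I would suppose $\{f_{j,a}^{(l)}\}$ is tight with bound $A$, so $\mathcal{S}=A\mathcal{I}$ and hence $\mathcal{S}^{-1}=A^{-1}\mathcal{I}$. Invoking the multiframelet decomposition Theorem~\ref{decom}, every $g$ admits the representation $g=\sum_{l\in\mathfrak L}\sum_{j\in\mathbb{Z}}\sum_{a\in I_{p}}\langle g,\mathcal{S}^{-1}f_{j,a}^{(l)}\rangle f_{j,a}^{(l)}$; substituting $\mathcal{S}^{-1}=A^{-1}\mathcal{I}$ and setting $\alpha:=A^{-1}>0$ turns this into $g=\sum_{l\in\mathfrak L}\sum_{j\in\mathbb{Z}}\sum_{a\in I_{p}}\langle g,\alpha f_{j,a}^{(l)}\rangle f_{j,a}^{(l)}$, which is precisely the assertion that $\{\alpha f_{j,a}^{(l)}\}$ is a dual of $\{f_{j,a}^{(l)}\}$.

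For the converse, assume $\{\alpha f_{j,a}^{(l)}\}$ is a dual for some $\alpha>0$. By the definition of dual multiframelet, every $g$ satisfies $g=\sum_{l\in\mathfrak L}\sum_{j\in\mathbb{Z}}\sum_{a\in I_{p}}\langle g,\alpha f_{j,a}^{(l)}\rangle f_{j,a}^{(l)}$; pulling the positive real scalar $\alpha$ out of the conjugate-linear slot yields $g=\alpha\sum_{l\in\mathfrak L}\sum_{j\in\mathbb{Z}}\sum_{a\in I_{p}}\langle g,f_{j,a}^{(l)}\rangle f_{j,a}^{(l)}=\alpha\mathcal{S}g$, whence $\mathcal{S}=\alpha^{-1}\mathcal{I}$, and therefore the system is tight with $A=B=\alpha^{-1}$.

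There is no deep obstacle in this argument; the only point demanding care is checking that the scaled system $\{\alpha f_{j,a}^{(l)}\}$ is itself a genuine multiframelet, since the very definition of \emph{dual} presupposes both families are multiframelets. This is immediate, as scaling by $\alpha>0$ preserves (\ref{dmf}) with bounds $\alpha^{2}A,\alpha^{2}B$, so the duality definition applies legitimately. A secondary (trivial) subtlety is that $\alpha$ must be real for it to pass through the inner product unconjugated, which holds by the hypothesis $\alpha>0$.
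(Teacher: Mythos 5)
Your proposal is correct and takes essentially the same route as the paper: the forward direction applies the decomposition theorem with $\mathcal{S}^{-1}=\alpha\mathcal{I}$, and the converse reads tightness off the reconstruction formula for the dual pair. The only (immaterial) difference is in the converse, where the paper pairs the expansion with $g$ to get $\|g\|^{2}=\alpha\sum_{l\in\mathfrak L}\sum_{j\in\mathbb{Z}}\sum_{a\in I_{p}}|\langle g,f_{j,a}^{(l)}\rangle|^{2}$ directly, while you first extract the operator identity $\mathcal{S}=\alpha^{-1}\mathcal{I}$.
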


\begin{proof}
Let $\{f_{j,~a}^{(l)} : j \in \mathbb{Z},~ a \in I_{p},~ l \in \mathfrak{L} \}$ be a tight multiframelet with bound $\frac{1}{\alpha }$, with the associated  multiframelet operator $\mathcal{S}$.
Then applying multiframelet decomposition theorem, for every $ f \in L^{2}(\mathbb{Q}_{p})$ we have,
\begin{eqnarray}
f = \sum \limits_{l \in \mathfrak L} \sum \limits_{j \in \mathbb{Z}} \sum
\limits_{a \in I_{p}} \langle f, \mathcal{S}^{-1} f_{j,a}^{(l)}
\rangle f_{j,a}^{(l)} \nonumber = \sum \limits_{l \in \mathfrak L} \sum
\limits_{j \in \mathbb{Z}} \sum \limits_{a \in I_{p}} \langle f, \alpha 
f_{j,a}^{(l)} \rangle f_{j,a}^{(l)}. \nonumber
\end{eqnarray}
Hence $\{\alpha  f_{j,~a}^{(l)} : j \in \mathbb{Z},~ a \in I_{p}, l \in \mathfrak{L} \}$ is a dual of $\{f_{j,~a}^{(l)} : j \in \mathbb{Z},~ a \in I_{p},~ l \in \mathfrak{L} \}$.

Conversely, suppose for some $\alpha >0$, $\{\alpha  f_{j,~a}^{(l)} : j \in \mathbb{Z},~
a \in I_{p},~ l \in \mathfrak{L} \}$  is a dual of $\{f_{j,~a}^{(l)} : j \in \mathbb{Z},~ a \in I_{p},~ l \in \mathfrak{L} \}$. Then for every $ g \in L^{2}(\mathbb{Q}_{p})$ we obtain,
$$\|g\|^{2} = \sum \limits_{l \in \mathfrak L} \sum \limits_{j \in \mathbb{Z}} \sum \limits_{a \in I_{p}} \langle g, \alpha  f_{j,a}^{(l)} \rangle  \langle f_{j,a}^{(l)}, g \rangle = \alpha  \sum \limits_{l \in \mathfrak L} \sum \limits_{j \in \mathbb{Z}} \sum \limits_{a \in I_{p}} |\langle g , f_{j,a}^{(l)} \rangle |^{2}.$$ 
Thus $\{f_{j,~a}^{(l)} : j \in \mathbb{Z},~ a \in I_{p},~ l \in \mathfrak{L} \}$ is a tight multiframelet for $L^{2}(\mathbb{Q}_{p})$ with bound $\frac{1}{\alpha }$.
\end{proof}

\section*{Acknowledgment}
The authors acknowledge the financial support of the Ministry of Human Resource Development (M.H.R.D.), Government of India. Furthermore, they would like to express their sincere gratitude to Dr. Divya Singh and Dr. Saikat Mukherjee for their guidance to prepare this article.



\end{document}